\newtheorem*{theorem*}{Theorem}
\newtheorem{teo}{Theorem}[section]
\newtheorem{prop}[teo]{Proposition}
\newtheorem{ddef}[teo]{Definition}
\newtheorem{example}[teo]{Example}
\newtheorem{cor}[teo]{Corollary}
\newtheorem{lem}[teo]{Lemma}
\newtheorem*{cor*}{Corollary}
\newtheorem*{remark}{Remark}
\newtheorem*{lem*}{Lemma}
\newtheorem*{teorA}{Theorem A}
\newtheorem*{teorB}{Theorem B}
\newtheorem*{teorA'}{Theorem A'}
\theoremstyle{definition}
\newcommand{\val}{{\rm Val}}
\newcommand{\X}{\mathfrak{X}}
\newcommand{\tg}{{\rm T}}
\newcommand{\ord}{{\rm ord}}
\newcommand{\dr}{\partial}
\newcommand{\C}{\mathbb{C}}
\newcommand{\CC}{\mathbb{C}}
\newcommand{\R}{\mathbb{R}}
\newcommand{\Pe}{\mathbb{P}}
\newcommand{\Z}{\mathbb{Z}}
\newcommand{\Q}{\mathbb{Q}}
\newcommand{\K}{\mathbb{K}}
\newcommand{\Po}{\mathcal{P}}
\newcommand{\B}{{\mathcal{B}}}
\newcommand{\D}{\mathcal{D}}
\newcommand{\F}{\mathcal{F}}
\newcommand{\cl}[1]{\mbox{$\mathcal{#1}$}}
\newcommand{\iso}{{\rm Iso}}
\newcommand{\dic}{{\rm Dic}}
\newcommand{\sep}{{\rm Sep}}
\newcommand*\xbar[1]{ %
   \hbox{ %
     \vbox{%
       \hrule height 0.3pt % The actual bar
       \kern0.35ex%         % Distance between bar and symbol
       \hbox{%
         \kern-0.1em%      % Shortening on the left side
         \ensuremath{#1}%
         \kern-0.1em%      % Shortening on the right side
       }%
     }%
   }%
}
\newcommand*\xxbar[1]{%
   \hbox{%
     \vbox{%
       \hrule height 0.3pt % The actual bar
       \kern0.4ex%         % Distance between bar and symbol
       \hbox{%
         \kern-0.1em%      % Shortening on the left side
         \ensuremath{#1}%
         \kern-0.1em%      % Shortening on the right side
       }%
     }%
   }%
}
\begin{document}

\setcounter{section}{0}
\setcounter{teo}{0}
\setcounter{exe}{0}

\author{ Eduardo Cabrera \& Rog\'erio   Mol}
\title{Separatrices for real analytic vector fields in the plane}
 \date{}
 \address{Departamento de Matem\'atica - ICEX, Universidade Federal de Minas Gerais, UFMG}
\curraddr{Av. Ant\^onio Carlos 6627, 31270-901, Belo Horizonte-MG, Brasil.}
\email{educz100@gmail.com   $\&$ rmol@ufmg.br}

\subjclass[2010]{32S65, 37F75, 34Cxx, 14P15}
\keywords{Real analytic vector field, formal and analytic separatrix, reduction of singularities, index of vector fields, polar invariants, center-focus vector field}

\thanks{First author   financed by a CNPq Ph.D. fellowship. Second   author  partially financed by Pronex-Faperj}
\maketitle

\begin{abstract} Let $X$ be a germ of real analytic vector field at $(\R^{2},0)$ with an algebracally isolated
singularity. We say that
$X$ is a topological generalized curve if there are no topological
saddle-nodes in its reduction of singularities. In this case, we prove that  if   either the order
$\nu_{0}(X)$  or the Milnor number
$\mu_{0}(X)$ is even, then $X$  has a formal separatrix, that is, a formal invariant curve at $0 \in \R^{2}$. This result is optimal, in the sense that these hypotheses do not assure the existence of
a convergent separatrix.
\end{abstract}

%\footnotetext[1]{ {\em 2010 Mathematics Subject Classification.}
% 37F75, 32S65, 32V40. }
%\footnotetext[2]{{\em
%Keywords.} Holomorphic foliation, CR-manifolds, Levi-flat varieties.}

 \medskip \medskip

\section{Introduction}
A \emph{separatrix} for a germ of   real or complex   analytic vector field,  at $(\R^{2},0)$
or at $(\C^{2},0)$, is an invariant irreducible formal curve passing through the origin.
The Separatrix theorem,   by C. Camacho and P. Sad, asserts that, in the complex case,
  a convergent separatrix always exists
(\cite{camacho1982}; see also \cite{jcano1997,sebastiani1997}
for alternative proofs).

A comprehensive result of this sort is  not true in the universe  
 of real analytic vector fields.  For instance,    vector fields  of center-focus type at $(\R^{2},0)$, such as
$X = y \dr/\dr x - x \dr/\dr y$,
 do not admit  invariant  curves through $0 \in \R^{2}$, neither analytic nor  formal. In this case, the complex separatrices of their complexifications
to $(\C^{2},0)$, which
 exist   by the Separatrix theorem, have   trivial real traces.
However, the search for real separatrices
may have a
 successful outcome
 within
 specific families of vector fields, delimited, for instance, by conditions
of algebraic nature imposed on the singularity.
This is the approach of  J-.J. Risler in  \cite{Risler2001}, when
proving that, in
the family of vector fields at $(\R^{2},0)$  of
real generalized curve type, a vector field
 $X=X_{\R}$    with an
algebraically isolated singularity
 has a convergent
separatrix  if either the algebraic multiplicity $\nu_{0}(X_{\R})$ or
the Milnor number $\mu_{0}(X_{\R})$ is even. More generally, omitting the real generalized curve hypothesis,
it is also proved in \cite{Risler2001} that the evenness of either $\nu_{0}(X_{\R})$ or
  $\mu_{0}(X_{\R})$ is sufficient to assure that $X_{\R}$ has a characteristic orbit, that is, a trajectory
accumulating to $0 \in \R^{2}$ having a well defined tangent at this point. In other words, a center-focus vector field must have both $\nu_{0}(X_{\R})$ and $\mu_{0}(X_{\R})$ odd.

Let us explain some terms. We say that a germ of real analytic vector field $X_{\R}$ has an algebraically isolated singularity
if   its  coefficients   vanish at $0 \in \R^{2}$ and are  relatively prime in the ring $\R\{x,y\}$ of germs
or real analytic functions at $(\R^{2},0)$.
We say that $X_{\R}$ --- or
 the singular one-dimensional real foliation  defined by it ---
is of \emph{real generalized curve} type if there are no saddle-nodes
 in its  reduction of singularities. In this process,
the final models    are  \emph{simple} singularities --- locally defined by vector fields having non-nilpotent linear part
with real eigenvalues with ratio not in $\Q^{+}$.
Saddle-nodes are simple singularities having one zero eigenvalue (see Section \ref{section-definitions} for definitions).

A real saddle-node singularity, from the topological point of view, can be a saddle, a node or a saddle-node.
We say that a germ of real analytic vector field $X_{\R}$ at $(\R^{2},0)$ is a \emph{topological real generalized curve} if there are no topological
saddle-nodes in its reduction of singularities (Definition \ref{def-topological-saddle-node}). For   vector fields in this family
 we prove the following:

\begin{theorem*}
Let $X_{\R}$ be a germ of real analytic vector field at $(\R^{2},0)$,    with an
algebraically isolated singularity at $0 \in \R^{2}$, of
topological real  generalized curve type. If
  either its order $\nu_{0}(X_{\R})$ or
its Milnor number $\mu_{0}(X_{\R})$  is even, then $X_{\R}$ has a (possibly formal)
separatrix
\end{theorem*}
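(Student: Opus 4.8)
The overall strategy is to pass to the complexification $X_{\C}$ of $X_{\R}$ at $(\C^2,0)$, use the Camacho--Sad Separatrix theorem to produce complex separatrices, and then extract from them a formal curve that is invariant under complex conjugation, hence descends to a formal real separatrix. The parity hypothesis on $\nu_0$ or $\mu_0$ will be used, via an index or polar-invariant count, to forbid the pathological situation in which all complex separatrices come in conjugate pairs with no conjugation-fixed curve among them.

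First I would recall the basic dictionary: a formal real separatrix of $X_{\R}$ is the same thing as a $\sigma$-invariant formal branch of $X_{\C}$, where $\sigma$ denotes complex conjugation. The set of complex separatrices of $X_{\C}$ is finite (the singularity is algebraically isolated, so after reduction of singularities the final simple singularities each contribute finitely many separatrices), and $\sigma$ acts on this finite set. So $X_{\R}$ has a formal separatrix if and only if this action has a fixed point. The task is therefore to show that the parity condition forces a $\sigma$-fixed separatrix.

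Next, I would invoke the reduction of singularities, which commutes with conjugation, and the topological real generalized curve hypothesis: after a finite composition of blow-ups $\pi$, centered at points of the exceptional divisor that are either real or come in conjugate pairs, the strict transform $\widetilde{X}$ has only simple singularities, none of which is a topological saddle-node. The key numerical input is a Camacho--Sad-type index formula together with the behaviour of $\nu_0$ and $\mu_0$ under blow-up. Concretely, one shows that the evenness of $\nu_0(X_{\R})$ (resp.\ of $\mu_0(X_{\R})$) propagates, through the standard blow-up formulas for algebraic multiplicity (resp.\ for the Milnor number, via the relation $\mu_0 = \nu_0^2 - \nu_0 + 1 - \sum(\text{contributions})$ or the additivity of $\mu$ over the reduction tree), to an odd/even count of real points or real separatrices on the final divisor. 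The cleanest formulation is: the number of real separatrices of $X_{\R}$ (counted suitably, or the number of $\sigma$-fixed points among the final singular points carrying separatrices) has the same parity as $\nu_0(X_{\R})+1$ or $\mu_0(X_{\R})+1$; so if either invariant is even, this count is odd, hence nonzero. Since a simple singularity of topological non–saddle-node type that is $\sigma$-invariant always carries a $\sigma$-invariant separatrix (one checks this case by case: a real node, a real saddle, and a pair of conjugate complex singularities — here one uses that topological saddle-nodes are excluded, because a genuine saddle-node could contribute only a non-transverse separatrix that need not be real), the existence of such a point yields the desired formal separatrix after pushing down by $\pi$.

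The main obstacle I expect is the bookkeeping that links the parity of $\nu_0$ or $\mu_0$ to the parity of the number of $\sigma$-fixed separatrix-carrying ends in the reduction tree. Over $\C$ this is essentially a consequence of the Camacho--Sad index theorem and known multiplicity formulas, but over $\R$ one must track which exceptional components and which singular points are real, how conjugate pairs contribute an even amount to every count, and how the topological-saddle-node-free hypothesis guarantees that a real simple singularity never ``hides'' its separatrix off the real locus. A secondary subtlety is the passage from the case $\nu_0$ even to the case $\mu_0$ even: these are handled by the same mechanism but with different blow-up formulas, and one should check that at least one of the two parity arguments goes through in each relevant reduction step (equivalently, that a topological saddle-node is exactly the obstruction that could flip both parities simultaneously). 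Once the parity statement is established, the descent from a $\sigma$-fixed complex separatrix to a formal real separatrix is routine: a conjugation-invariant formal branch in $\C\{x,y\}$ has a Weierstrass/parametrization that can be chosen with real coefficients.
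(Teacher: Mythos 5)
Your overall skeleton---complexify, observe that non-real separatrices of $\F_{\C}$ come in $J$-conjugate pairs, and force a $J$-fixed one by a parity argument---is indeed the strategy of the paper. But the proposal stops exactly where the proof begins: the identity linking the parity of $\nu_{0}$ or $\mu_{0}$ to the parity of a count of real separatrices is asserted (``the cleanest formulation is\dots'') and deferred to ``the Camacho--Sad index theorem and known multiplicity formulas,'' and no such off-the-shelf statement exists. The mechanism the paper actually uses is: (i) for the case $\nu_{0}$ even, Genzmer's formula $\nu_{0}(\F)=\nu_{0}(\B)-1+\tau_{0}(\F)$ for a \emph{balanced divisor} of separatrices $\B$, combined with $\nu_{0}(\B_{\R})\equiv\nu_{0}(\B_{\C})\pmod 2$ for a $J$-symmetric choice of $\B_{\C}$; (ii) for the case $\mu_{0}$ even and $\nu_{0}$ odd, a polar-intersection identity $p_{0}(\F,\B)=\mu_{0}(\F)+\nu_{0}(\F)-\tau_{0}(\F,\Gamma)$ that the paper must prove from scratch (Proposition \ref{prop-polar-mu-nu}), since it strengthens a previously known inequality. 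Your alternative of pushing $\mu_{0}$ through blow-up formulas reintroduces the unknown parity of the number of singular points at each stage and is not carried out. Note also that $\sep(\F_{\C})$ is \emph{infinite} whenever the reduction has dicritical components, so ``$\sigma$ acts on this finite set'' is false as stated; the balanced divisor, with its valence condition $\sum a_{B}=2-\val(D)$ on dicritical components, is precisely the device that replaces naive counting of separatrices.

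Two further points. First, the role of the topological-saddle-node exclusion is misidentified: for a real algebraic saddle-node both the strong and the weak separatrix are $\sigma$-fixed, so there is no issue of a weak separatrix ``that need not be real.'' The hypothesis is used because a topological saddle-node is exactly an algebraic saddle-node with \emph{even} weak index; excluding them forces every weak index occurring in the reduction to be odd, hence the tangency excess $\tau_{0}$ (and its relative version along a generic polar curve) to be even, and this evenness is the only input that lets both parity computations close. Your parenthetical remark that a topological saddle-node ``could flip both parities simultaneously'' is the correct intuition, but establishing it is the substance of the theorem, not an aside. Second, the Camacho--Sad separatrix theorem is never needed: once one knows that $\nu_{0}(\B_{\R})$ (or $p_{0}(\F_{\R},\B_{\R})$) is odd, the divisor $\B_{\R}$ has nonempty support and the separatrix is already exhibited; the paper states explicitly that the separatrix theorem is not invoked at any point.
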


We remark that, with the hypotheses stated, we cannot ask for a convergent
separatrix.
Below, we reproduce an  example from \cite{molsanz2019}, a vector field   which  is a topological generalized curve having both $\nu_{0}$ and $\mu_{0}$ even, but admitting only one separatrix
which is  purely formal.
Note that  vector fields  of generalized curve type or of center-focus type are, in particular,
of  topological generalized curve type. Hence,
Risler's results on separatrices and characteristic orbits mentioned above
can be obtained as immediate corollaries of our theorem.

For proof purposes, our main theorem will be split in two, Theorems A and B, concerning,
respectively, the situation of  even  $\nu_{0}$ and even  $\mu_{0}$.
Their proofs rely on  a study of indices and of  the reduction of singularities
of the one-dimensional foliation induced by $X_{\R}$. At no moment the Separatrix theorem is invoked.
As in \cite{Risler2001}, our strategy is to take the complexification $X_{\C}$ of $X_{\R}$ to $(\C^{2},0)$ and use the fact that it induces a complex one-dimensional foliation that is invariant
by the canonical anti-holomorphic involution $J:(x,y)\to (\bar{x},\bar{y})$. Thus, $J$ will preserve the separatrices of $X_{\C}$ and the fixed ones    are precisely those whose real traces
define separatrices for $X_{\R}$.

Two main ingredients are used in our proof.
The first one is the \emph{tangency excess index} (Definition \ref{def-tang-exc}),   an invariant
that computes the contributions of the orders of tangencies of saddle-node singularities along
the divisor of the reduction of singularities. For  a germ of complex analytic vector field,
this is \ $C^{\infty}$-invariant \cite{molrosas1919}. In the real case, the fact that a  vector field is
a topological real generalized curve implies that its   tangency excess index is even.
The second   ingredient  is the notion of \emph{balanced divisor of separatrices}. In the complex case,
this     is a tool that computes $\nu_{0}$
(see \cite{genzmer2007} and Proposition \ref{prop-mult-balanced} below).
 For the complexification $X_{\C}$ of $X_{\R}$,
the balanced divisor of separatrices can be taken symmetric with respect to the involution $J$.
Hence,  to   prove our results, it is enough  is to show  that such a balanced divisor is supported in an odd number of separatrices.
Theorem A is then obtained by a simple consideration of Proposition \ref{prop-mult-balanced}
(see Section \ref{section-theoremA}).

In order to prove Theorem B, we   use some tools from the polar theory of complex foliations,
which  are developed in Section  \ref{section-polar}.
More specifically, we
consider the \emph{polar intersection number} --- an invariant
widely studied in \cite{corral2003,cano2019} --- and  prove
Proposition \ref{prop-polar-mu-nu}, which shows a formula
relating the    order  and the Milnor number of a complex foliation with
polar intersection numbers and tangency excess indices.  This   result,  interesting in itself,
generalizes an inequality given in Proposition  2 of  \cite{cano2019}.
%The application of this formula, in Section \ref{section-theoremB}, gives a proof to    Theorem B.
Finally, in Section \ref{section-theoremB}, the application of this formula gives a proof to   Theorem B.
%We finally use this formula,   in Section \ref{section-theoremB}, in order to prove Theorem B.

\section{Basic definitions and notation}
\label{section-definitions}

We consider the field $\K = \R$ or $\C$, of real or complex numbers.
Along the text, we  use the symbol $\K$  as a prefix, replacing
the adjectives ``real'' or ``complex'', and as a subscript,
indicating the field we are working with.

In $\K^{2}$ we take variables $(x,y)$. Whether they are real or complex will be made clear by the context.  Let $\K [[x,y]]$ denote the ring of formal  power series in the variables $(x,y)$,
and $\K\{x,y\} \subset \K [[x,y]]$   denote  the ring  of convergent power series, the latter identified  with the ring  of germs of $\K$-analytic %(real analytic or complex analytic)
functions   at $(\K^{2},0)$.
Denote by $\X_{\K}$ and $\Omega_{\K}$  the
space of germs of $\K$-analytic  vector fields and $\K$-analytic $1-$forms at $(\K^{2},0)$.

The canonical  \emph{complexification}  %(see, for instance, \cite{cartan1957})
of $\R$ is given by
\[  F_{\R}(x,y)= \sum_{i,j \geq 0} a_{i,j}x^{i}y^{j} \in \R[[x,y]] \ \ \mapsto \ \ F_{\CC}(z,w) = \sum_{i,j \geq 0} a_{i,j}x^{i}y^{j} \in \C[[x,y]].\]
It defines   inclusions $\R[[x,y]] \hookrightarrow \C[[x,y]]$ and  $\R\{x,y\} \hookrightarrow \C\{x,y\}$.
This process is extended to  $\X_{\R}$ or to $\Omega_{\R}$, by simple
substitution of   coefficients in
$\R\{x,y\}$ by their complexifications in $\C\{x,y\}$, so
 we also have inclusions $\X_{\R} \hookrightarrow \X_{\C}$
and $\Omega_{\R} \hookrightarrow \Omega_{\C}$.

Let $J:(x,y) \in \C^{2} \mapsto (\bar{x},\bar{y}) \in \C^{2}$ be the
canonical anti-holomorphic involution. We can view $\R^{2}$ as the subset of $\C^{2}$
of fixed points of $J$ and call it \emph{real trace}. We
establish, in $\C[[x,y]]$ or in $\C\{x,y\}$, the   operator
\[J^\vee: f_{\C} = \sum_{i,j\geq 0}a_{ij}x^{i}y^{j} \mapsto f_{\C}^{\vee} =
 J^\vee(f_{\C}) = \sum_{i,j\geq 0} \bar{a}_{ij}x^{i}y^{j},\]
  defined in such a way that  $f_{\C}^\vee(J(x,y)) =  \xbar{f_{\C}(x,y)}$.
Note   that
 $f_{\C} \in \C[[x,y]]$ is the complexification of
$f_{\R} \in \R[[x,y]]$ if and only if $f_{\C}^{\vee} = f_{\C}$.
By acting on the coefficients of a vector field or of a $1-$form, $J^\vee$ can be extended to $\X_{\C}$ or to $\Omega_{\C}$.
Following our notation, for $X_{\C} \in \X_{\C}$ and  $\omega_{\C} \in \Omega_{\C}$, we set
$J^\vee(X_{\C})= X_{\C}^{\vee}$ and  $J^\vee(\omega_{\C}) = \omega_{\C}^{\vee}$.
Elements of $\X_{\C}$ or of $\omega_{\C}$ in the image of the complexification map are precisely those
that are fixed by  $J^\vee$.

Let $X_{\K} \in \X_{\K}$ and write
$X_{\K}= P_{\K} \partial/\partial x+Q_{\K}\partial/\partial y$, where $P_{\K}, Q_{\K} \in \K\{x, y\}$ are assumed to be relatively prime. In a small neighborhood $U_{\K}$ of   $0\in \K^2$,
the vector field $X_{\K}$ induces
a $\K$-analytic one-dimensional foliation,
which is singular, with isolated singularity at the origin,  if and only if  $P_{\K}(0,0) = Q_{\K}(0,0)=0$.
We denote this foliation by  $\F_{\K}$, which is also defined
 by the equation $ \omega_{\K} = 0$, where $\omega_{\K} \in \Omega_{\K}$ is the dual $1-$form
$\omega_{\K}  =  P_{\K} dy - Q_{\K} dx $. We will assume
   $\F_{\C}$ to be the complexification of
the  real foliation $\F_{\R}$, meaning that $X_{\C} \in \X_{\C}$ is a vector field, invariant by $J^\vee$, that is the complexification of  $X_{\R} \in \X_{\R}$.
Thus,  $\F_{\C}$ will be $J$-invariant, meaning $J$ sends leaves of $\F_{\C}$ into leaves of $\F_{\C}$.
Besides, $\F_{\R} = \F_{\C}|_{\R^{2}}$, that is, leaves of $\F_{\R}$ are obtained by the restriction
of those of $\F_{\C}$ to the real trace $\R^{2}$.
This convention will be abandoned only in Section \ref{section-polar}, where the results presented
apply to complex analytic vector fields in general.

A \emph{separatrix}  of a $\K$-analytic foliation
$\F_{\K}$ is an invariant irreducible $\K$-formal curve. That is, it is the object $B_{\K}$ given
by a reduced parametrization in one variable
$\gamma_{\K}(t) = (x_{\K}(t),y_{\K}(t))$, where  $x_{\K},y_{\K} \in t\K[[t]] $,
defined up to right composition by formal diffeomorphisms  in one variable, satisfying the
formal relation
  \[ ( P_{\K} \circ \gamma_{\K}(t)) y_{\K}'(t)   - (Q_{\K} \circ \gamma_{\K}(t))x_{\K}'(t) = 0 .\]
When $\gamma_{\K}(t)$ is $\K$-analytic, we have a \emph{$\K$-analytic separatrix}. In this case,
 the parametrization  %$\gamma_{\K}(t)$
 defines a germ of  geometric curve in a neighborhood of $0 \in \K^{2}$, still denoted by $B_{\K}$,  such that $B_{\K}\setminus \{0\}$ is a leaf (when $\K = \C$) or two leaves (when $\K = \R$) of $\F_{\K}$.
  We denote the
 family of all separatrices of $\F_{\K}$
  by $\sep(\F_{\K})$.

  The $J$-symmetry of $\F_{\C}$ implies that the involution $J$ takes separatrices into separatrices
by the following correspondence:
\[  B_{\C}:\ \gamma_{\C}(t) = (x_{\C}(t),y_{\C}(t)) \ \ \mapsto \ \  B_{\C}^{\vee}:\ \gamma_{\C}^{\vee}(t) = J(x_{\C}(\bar{t}),y_{\C}(\bar{t})).\]
Separatrices such that $B_{\C} = B_{\C}^{\vee}$
are called \emph{real} and we denote their family by   $\sep^{r}(\F_{\C})$.
They are in bijection with $\sep(\F_{\R})$.
The other separatrices of $\F_{\C}$, those such that
$B_{\C} \neq B_{\C}^{\vee}$,
are called   \emph{purely complex} and we denote their set   by   $\sep^{c}(\F_{\C})$.
We then have a disjoint union
$\sep(\F_{\C}) = \sep^{r}(\F_{\C}) \cup \sep^{c}(\F_{\C})$.
Since $(B_{\C}^{\vee})^{\vee} = B_{\C}$, we have
 that separatrices in $\sep^{c}(\F_{\C})$ appear in pairs.
This remark will be crucial in  the development of our results.

For a germ of $\C$-analytic foliation $\F_{\C}$, induced by  vector field $X_{\C}$, we consider the following invariants:
\begin{itemize}
\item  $\nu_{0}(\F_{\C})$ is the \emph{order} or the \emph{algebraic multiplicity} of   $\F_{\C}$ at $0 \in \C^{2}$, which is the minimum of the orders of the coefficients of $X_{\C}$.
\item $\mu_{0}(\F_{\C})$, the \emph{Milnor number} of   $\F_{\C}$, defined as  the dimension of the local
algebra
\begin{equation*}
\mu_{0}(\F_{\C})= \dim_{\mathbb{C}} \frac{\mathbb{C}\{x, y\}}{(P_{\CC}, Q_{\CC})}.
\end{equation*}
\end{itemize}
For a germ of real foliation $\F_{\R}$ with complexification $\F_{\C}$, we set
$\nu_{0}(\F_{\R}) = \nu_{0}(\F_{\C})$ and  $\mu_{0}(\F_{\R}) = \mu_{0}(\F_{\C})$.

We denote by $\pi_{\K}:  (\tilde{\K^{2}},D_{\K}) \to (\K^{2},0)$ the  $\K$-\emph{blow-up} at  $0 \in \K^{2}$.
The space $\tilde{\K^{2}}$
  is  a   $\K$-analytic surface,
 obtained by replacing the origin by its set of tangent
directions,  parametrized  by the projective line  $D_{\K} \simeq \Pe^{1}_{\K}$.
It is defined by coordinates $(x,u), (v,y) \in \K^{2}$
identified by the relations $y = ux$ and $x = vy$, in such a way that
$\pi_{\K}:(x,u) \mapsto (x,ux),\ (v,y) \mapsto (vy,y)$.
Restricted to  $\tilde{\K^{2}} \setminus D_{\K}$, the blow-up is a $\K$-analytic diffeomorphisms   onto its image.

When $\K = \C$, the involution $J:(x,y) \in \C^{2} \mapsto (\bar{x},\bar{y}) \in \C^{2}$   lifts to
$\tilde{\C^{2}}$,  defining a unique continuous involution  $J_{1}:  \tilde{\C^{2}} \to \tilde{\C^{2}}$
such that   $\pi_{\C} \circ J_{1} = J  \circ \pi_{\C}$. We can canonically identify
$\tilde{\R^{2}}$ with the fixed set of
  this lifting, that is
$\tilde{\R^{2}}  = \tilde{\C^{2}} \cap  J_{1}(\tilde{\C^{2}})$, so
that $D_{\R} = D_{\C} \cap \tilde{\R^{2}}$ and
$\pi_{\R} = \pi_{\C}|_{\tilde{\R^{2}}}$. We will say that
$\tilde{\R^{2}}$ is the \emph{real trace} of  $\tilde{\C^{2}}$.

Given     a germ
of $\K$-analytic singular foliation $\F_{\K}$ at $(\K^{2},0)$,     there is a unique $\K$-analytic singular  foliation
$\pi_{\K}^{*} \F_{\K}$, the \emph{strict transform} of $\F_{\K}$,  defined in $(\tilde{\K^{2}},D_{\K})$ and
 having isolated singularities over $D_{\K}$, that corresponds diffeomorphically to $\F_{\K}$ over the points where $\pi_{\K}$ is a local diffeomorphism. If the   line $D_{\K}$ is  invariant by
$\pi_{\K}^{*} \F_{\K}$, we   say that the blow-up $\pi_{\K}$ or the component $D_{\K}$ is   \emph{non-dicritical}. It is \emph{dicritical} otherwise.

We iterate the process of blowing up and consider
a   composition $\sigma_{\K} = \pi^{1}_{\K} \circ \cdots \circ \pi^{k}_{\K}$ of blow-ups
$\pi^{j}_{\K}: (\tilde{M}^{j}_{\K},\D^{j}_{\K}) \to (\tilde{M}^{j-1}_{\K},\D^{j-1}_{\K})$, for $j=1,\ldots,k$,
  where:
  \begin{itemize}
  \item $(\tilde{M}^{0}_{\K},\D^{0}_{\K}) \simeq (\K^{2},0)$, $(\tilde{M}^{1}_{\K},\D^{1}_{\K}) \simeq (\tilde{\K^{2}}, D_{\K})$ and
 $\pi^{1}_{\K}: (\tilde{M}^{1}_{\K},\D^{1}_{\K}) \to (\tilde{M}^{0}_{\K},\D^{0}_{\K})$ is the
 $\K$-blow-up at $0 \in \K^{2}$;
 \item each $\pi^{j}_{\K}$ is a blow-up at a point $p_{j-1} \in \D_{\K}^{j-1}$;
 \item For $j=1,\ldots,k$, $\D^{j}_{\K} =  (\pi^{1}_{\K} \circ \cdots \circ \pi^{j}_{\K})^{-1}(0)$ is a normal crossings divisor, whose irreducible
 components are isomorphic to projective lines $\Pe^{1}_{\K}$.
 \end{itemize}
Setting $(\tilde{M}_{\K},\D_{\K}) \simeq  (\tilde{M}^{k}_{\K},\D^{k}_{\K})$,
we  denote this sequence of blow-ups by $\sigma_{\K}: (\tilde{M}_{\K},\D_{\K}) \to (\K^{2},0)$.
 The smooth points of $\D_{\K}$ are called \emph{trace points}, while its singular points are called \emph{corners}.
This iterated  construction, applied to a germ of singular foliation $\F_{\K}$,  produces a {strict transform} foliation $\sigma_{\K}^{*}  \F_{\K} $ in $(\tilde{M}_{\K},\D_{\K})$.
The irreducible components of $\D_{\K}$ are   classified
 as \emph{dicritical} --- non-invariant by $\tilde{\F}_{\K}$ ---  or \emph{non-dicritical} ---
 invariant by $\tilde{\F}_{\K}$.

Suppose that $0 \in \K^{2}$ is a singularity for $\F_{\K}$.  This  singularity is said to be \emph{simple} if
the linear part $DX_{\K}(0)$ of the vector field $X_{\K}$, that induces $\F_{\K}$,  has
eigenvalues $\lambda_ 1, \lambda_2 \in \K$ satisfying  one of the two conditions:
	\begin{enumerate}[label=\roman*)]
		\item $\lambda_1,   \lambda_2 \neq 0$ and $ \lambda_1 / \lambda_2  \notin \mathbb{Q}^+$;
		\item $\lambda_1 \neq 0$, $\lambda_2 = 0$.
	\end{enumerate}
In case (i) we have a \emph{non-degenerate} singularity and in case (ii) we have a \emph{saddle-node} singularity.
In the real case, for reasons that will become clear later, we will sometimes refer to the second case as an
\emph{algebraic saddle-node}.
A simple singularity has exactly two smooth transversal
separatrices (see \cite{ilyashenko2008}). In the non-degenerate case, both are convergent. In the
saddle-node case, the one associated to the eigenspace of the non-zero eigenvalue is convergent
and is called \emph{strong separatrix}.  The other separatrix is, in  principle,
formal, and is called \emph{weak} of \emph{central separatrix}.
A saddle-node singularity can be expressed, in formal coordinates, by a $1-$form of the type \cite{martinet1982}:
\begin{equation}
\label{eq-saddle-node-normal}
\omega_{\K} = y(1+\mu x^k) dx + x^{k+1} dy,
\end{equation}
where $\mu \in \K$  and $k \in \mathbb{Z}_{+}$ are formal invariants.
In this writing, the strong separatrix corresponds to $\{x=0\}$ and  the weak separatrix to $\{y=0\}$. The integer $\iota^{w}_0(\mathcal{F}_{\K})= k+1>1$ is called  \emph{weak index} of the saddle-node.

When $\K = \R$, an algebraic saddle-node singularity can be, from the topological point of view, a saddle, a node or a saddle-node. If $\iota^{w}_0(\mathcal{F}_{\K})$ is even, it is a topological saddle-node.
If $\iota^{w}_0(\F_{\K})$ is odd,  it can be either a saddle or a node (see \cite[Th. 9.1]{ilyashenko2008}).
%When both separatrices are convergent this can easily be seen
%from  normal form \eqref{eq-saddle-node-normal}.

The foliation $\F_{\K}$
admits   a \emph{ reduction of singularities} or \emph{desingularization}   by Seidenberg's theorem \cite{seidenberg1968}.
This means that there is a sequence of $\K$-blow-ups
\begin{equation}
\label{eq-reduction-of-sing}
\sigma_{\K} =  (\tilde{M}_{\K},\cl{D}_{\K})   \to (\K^{2},0)
\end{equation}
such that strict transform foliation
 $\tilde{\F}_\K = \sigma_{\K}^{*} \F$ has a finite number o singularities, all of them over $\cl{D}_{\K}$,
 all of them   simple.  We  can   ask further   $\tilde{\mathcal{F}}_\mathbb{C}$ to be everywhere transversal to each dicritical component of
 $\D_{\C}$   ---
 we say in this case that $\sigma_{\K}$ desingularizes the set of separatrices
$\sep(\F_{\K})$ (see \cite{camacho1984}).
Also, we require  that  the sequence of blow-ups $\sigma_{\K}$ is minimal with these properties,
being thus unique up to  isomorphism. Once and for all,
we fix a minimal reduction of singularities \eqref{eq-reduction-of-sing} for $\F_{\K}$.

We say that $B_{\K} \in \sep(\F_{\K})$ is a \emph{dicritical separatrix} if $\tilde{B}_{\K} = \sigma_{\K}^{*}B_{\K}$ touches $\cl{D}_{\K}$
in a dicritical component. Otherwise, we say that $B_{\K}$ is an \emph{isolated} separatrix.
This engenders a decomposition
$\sep(\F_{\K}) =  \iso(\F_{\K}) \cup \dic(\F_{\K})$, where notation is self-explanatory.
Isolated separatrices are in one-to-one correspondence with trace singularities of $\tilde{\F}_{\K}$.
All purely formal separatrices of $\F_{\K}$ are in $\iso(\F_{\K})$ and come from weak separatrices
of saddle-node singularities of   $\tilde{\F}_{\K}$ that are not contained in $\D_{\K}$.

When $\K = \C$, the $J$-symmetry of $\F_{\C}$
 allows us to
lift the involution
$J$ to a   continuous involution  $\tilde{J} :  \tilde{M}_{\C} \to \tilde{M}_{\C}$
such that   $\sigma_{\C} \circ \tilde{J } = J  \circ \sigma_{\C}$.
In this way $\tilde{M}_{\R}$ can be identified with the fixed set of $\tilde{J}$.
%:$\tilde{M}_{\R}= \tilde{M}_{\C} \cap  \tilde{J}(\tilde{M}_{\C})$.
The foliation $\tilde{\F}_{\C}$ is evidently   symmetric with respect to $\tilde{J}$ and
$\tilde{\F}_{\R}$ is the restriction of $\tilde{\F}_{\C}$ to the real trace $\tilde{M}_{\R}$.
%meaning that the leaves
%of $\tilde{\F}_{\R}$ are obtained as the restriction of those of $\tilde{\F}_{\C}$ to the real trace $\tilde{M}_{\R}$.
Also, $\sigma_{\R}$ is the restriction
of $\sigma_{\C}$  to $\tilde{M}_{\R}$.

We have the following concept \cite{camacho1984, Risler2001}:
a germ of $\K$-analytic singular foliation $\F_{\K}$ at $(\K^{2},0)$ is a \emph{$\K$-generalized curve}
(a $\K$GC foliation) if there are no saddle-nodes in its reduction of singularities.
When $\K = \R$, considering that  algebraic saddle-nodes may differ
in their topological picture, we delimit the following broader family of foliations:
\begin{ddef}
\label{def-topological-saddle-node}
{\rm
A germ of $\R$-analytic singular foliation $\F_{\R}$ at $(\R^{2},0)$ is a \emph{topological $\R$-generalized curve}
if there are no topological saddle-nodes in its reduction of singularities.
}\end{ddef}

The  notion of generalized curve foliation can be weakened by the admission, in the reduction of singularities, of some
saddle-node singularities with a good orientation.
A saddle-node singularity  of $\tilde{\F}_{\K}$ is said to be \emph{tangent} if
its weak separatrix is contained in $\D_{\K}$.
%A non-tangent saddle-node is also called \emph{well-oriented}.
For instance, a saddle-node   in a corner of $\D_{\K}$ is always tangent.
We say that $\F_{\K}$ is of
\emph{$\K$-second type} (a $\K$ST foliation)  if there are no tangent  saddle-nodes in its reduction of singularities \cite{mattei2004}.
The family of $\K$ST  foliations evidently contains that of $\K$GC foliations.
%If $\F_{\K}$ is a $\K$-second type foliation, then a desingularization of separatrices is a reduction
%of singularities for the foliation \cite{mol2019}.
%Since purely formal separatrices only appear as weak separatrices of saddle-nodes,	
%if a foliation $\F_{\K}$ is a $\K$-generalized curve, then all its separatrices are analytic.

%Let $\F_{\K}$ be the singular foliation induced at $(\K^{2},0)$
%by a germ of $\K$-analytic vector field $Z_{\K}$, having a reduction of singularities
%%$\sigma_{\K} =  (\tilde{M}_{\K},\cl{D}_{\K})   \to (\K^{2},0)$.
%%Denote, as usual, $\tilde{\F}_{\K} = \sigma_{\K}^{*} \F_{\K}$.
To each $D_{\K} \subset \D_{\K}$ we associate a \emph{weight}
 $\rho(D_{\K}) \in \Z_{>0}$,
defined as the order at $0 \in \K^{2}$ of a germ of $\K$-analytic curve  whose strict transform by $\sigma_{\K}$ is   transversal to $\D_{\K}$. The weight also has
 a combinatorial description which can be seen in \cite{camacho1984}.
 % It can be calculated by the following rules:
% \begin{itemize}
%\item  $\rho(D_{\K})$ is assigned once $D_{\K}$ appears in the sequence of blow-ups composing $\sigma_{\K}$;
%\item for the first blow-up,   $D_{\K} = (\pi^{1}_{\K})^{-1}(0)$, we set  $\rho(D_{\K})= 1$;
%\item for an intermediate blow-up, $D_{\K} = (\pi^{j}_{\K})^{-1}(p_{j-1})$, then
%$\rho(D_{\K})$ is the weight of the component, if $p_{j-1}$ is a trace point, or the  sum of     weights  of the  two components, if $p_{j-1}$ is a corner,   of $\cl{D}_{\K}^{j-1}$
%containing $p_{j-1}$.
% \end{itemize}
It appears in the definition of the  following invariant, which  is a  measure of  the existence of tangent saddle-nodes in the reduction of singularities of $\F_{\K}$ :
\begin{ddef}
\label{def-tang-exc}
{\rm
The	$\K$-\emph{tangency excess} of $\F_{\K}$ at $0 \in \K^{2}$ is the integer
\[  \tau_{0}(\F_{\K})
=\sum_{q\in {\rm SN}(\tilde{\F}_{\K})} \rho(D_{\K,q})(\iota_q^{w}(\tilde{\F}_{\K})-1).\]
}\end{ddef}
In the formula above, ${\rm SN}(\tilde{\F}_{\K})$ stands for the set of tangent saddle-nodes of $\tilde{\F}_{\K}$ in $\D_{\K}$, $D_{\K,q}$ is the component of $\D_{\K}$ containing the weak separatrix of $\tilde{\F}_{\K}$  at $q$   and $\iota_q^{w}( \tilde{\F}_{\K})$ is the weak index.
When  $\F_{\K}$ is has a simple singularity at $0 \in \K^{2}$, the sum is understood as empty and
$\tau_{0}(\F_{\K})=0$.
Observe that $\tau_{0}(\F_{\K}) \geq 0$ and  $\tau_{0}(\F_{\K}) = 0$ if and only if
${\rm SN}(\tilde{\F}_{\K}) = \emptyset$, i.e. if and only if $\F_{\K}$ is
a $\K$-second type foliation.

Our main theorems will be stated for the family of topological
 $\R$GC  foliations. The following remark will be useful in their proofs:
\begin{remark}
{\rm
When $\K = \R$ and  $\F_{\R}$ is a topological $\R$GC foliation, then
$\tau_{0}(\F_{\R})$ is even. Indeed,   all tangent saddle-node are topological
saddles or nodes, so their weak indices are odd and the  contribution of each one to
the tangency excess is even.
}\end{remark}

%When $\K = \C$, the tangency excess is a $C^{\infty}$ invariant \cite{mol2019}.

\section{The algebraic multiplicity and separatrices}
\label{section-theoremA}

For $\K=\R$ and $\C$, let    $X_{\K}$ denote  germs of $\K$-analytic vector fields
at $(\K^{2},0)$ defining a foliation $\F_{\K}$. We keep in this section the convention that   $\F_{\C}$
is the complexification of $\F_{\R}$. For them, we fix  reductions of singularities as
in \eqref{eq-reduction-of-sing}.

The \emph{valence} $\val(D_{\K})$ of    $D_{\K} \subset \D_{\K}$  is the number of irreducible components of $\D_{\K}$ that intercept  $D_{\K}$, other than $D_{\K}$ itself.
Observe that, if $D_{\C} = D_{\C}^{\vee}$ and
$D_{\R} = D_{\C} \cap \tilde{M}_{\R}$,
then $\val(D_{\R}) \leq \val(D_{\C})$, and the inequality   can be strict.
However, from the $\tilde{J}$-symmetry of $\tilde{\F}_{\C}$, they have
the same parity: $\val(D_{\R}) \equiv \val(D_{\C}) \pmod{2}$.
On the other hand, it is easy to see that
 $\rho(D_{\R}) = \rho(D_{\C})$.

A \emph{$\K$-divisor of separatrices } for   $\mathcal{F}_{\K}$  is a formal sum
\[\mathcal{B}_{\K}=\sum_{B_{\K} \in  \sep(\F_{\K})}a_{B_{\K}} \cdot B_{\K},\]
where the coefficients $a_{B_{\K}} \in \mathbb{Z}$ are zero except for a finite number of $B_{\K}\in   \sep(\F_{\K})$.
The \emph{order}  of   $\mathcal{B}_{\K}$ is calculated
additively:
$\nu_0 (\B_{\K}) = \sum_{B_{\K} \in  \sep(\F_{\K})}a_{B_{\K}} \nu_{0}(B_{\K})$.
The \emph{support} of $\B_{K}$   is the set formed by all $B_{\K} \in  \sep(\F_{\K})$
such that $a_{B_{\K}} \neq 0$.
Whenever necessary, we can separate isolated and dicritical separatrices and decompose $\B_{\K} = \B_{\K,\text{iso}} + \B_{\K,\text{dic}}$.

We say that $\B_{\K}$ is
a	\emph{$\K$-balanced divisor of separatrices}  for $\F_{\K}$  (see \cite{genzmer2007})
 if its  coefficients
satisfy the following conditions:
\begin{itemize}
\item  $a_{B_{\K}} \in \{-1, 0, 1\}$;
\item $a_{B_{\K}}=1$ for every $B_{\K} \in \iso(\F_{\K})$;
\item for a fixed dicritical component $D_{\K} \subset \D_{\K}$, the following equality holds:
$$\sum_{B_{\K} \in \sep(D_{\K})}  a_{B_{\K}} = 2 - \val(D_{\K}),$$
where $\sep(D_{\K})$ is the set of separatrices associated to $D_{\K}$.

\end{itemize}

Observe that  $\nu_{0}(\B_{\K})$ is the same for all balanced divisors associated to
 $\F_{\K}$.
Actually, in the complex case, it has the following relation with the order $\nu_{0}(\F_{\C})$:
\begin{prop} \cite{genzmer2007}
\label{prop-mult-balanced}
Let $\mathcal{F}_\mathbb{C}$ be a germ of singular foliation at $(\mathbb{C}^2,0)$ having  $\mathcal{B}_\mathbb{C}$ as a balanced divisor of separatrices.  Then
\[\nu_0(\mathcal{F}_\mathbb{C})	= \nu_0(\mathcal{B}_\mathbb{C})-1+\tau_{0} ( \mathcal{F}_\mathbb{C}).\]
\end{prop}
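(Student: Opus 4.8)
The plan is to prove the equivalent identity $\nu_0(\mathcal F_{\mathbb{C}})+1=\nu_0(\mathcal B_{\mathbb{C}})+\tau_{0}(\mathcal F_{\mathbb{C}})$ by expressing \emph{both} sides as $\rho$-weighted sums over the irreducible components of the divisor $\D_{\mathbb{C}}$ of the fixed minimal reduction of singularities $\sigma_{\mathbb{C}}$ of $\mathcal F_{\mathbb{C}}$ (chosen, as stipulated, so as to also desingularize $\sep(\mathcal F_{\mathbb{C}})$), and then comparing them term by term. Throughout, $\tilde{\mathcal F}_{\mathbb{C}}=\sigma_{\mathbb{C}}^{*}\mathcal F_{\mathbb{C}}$, and for a separatrix $B$ in the support of $\mathcal B_{\mathbb{C}}$ I write $D(B)$ for the component of $\D_{\mathbb{C}}$ that the strict transform $\sigma_{\mathbb{C}}^{*}B$ is attached to.

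First I would compute $\nu_0(\mathcal B_{\mathbb{C}})$. Since $\sigma_{\mathbb{C}}$ desingularizes the separatrices, $\sigma_{\mathbb{C}}^{*}B$ is a smooth curve meeting $\D_{\mathbb{C}}$ transversally at a single trace point of $D(B)$; by the very definition of the weight this forces $\nu_0(B)=\rho(D(B))$. Grouping the separatrices of $\mathcal B_{\mathbb{C}}$ by the component to which they are attached gives
\[\nu_0(\mathcal B_{\mathbb{C}})=\sum_{D_i\subset\D_{\mathbb{C}}}\rho(D_i)\,n_i,\qquad n_i:=\sum_{B:\,D(B)=D_i}a_{B}.\]
If $D_i$ is dicritical, $n_i=2-\val(D_i)$ is exactly the balanced condition. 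If $D_i$ is non-dicritical, at each trace singularity $q$ of $\tilde{\mathcal F}_{\mathbb{C}}$ on $D_i$ one of the two separatrices of the simple germ at $q$ is $D_i$ itself and the other — not contained in $\D_{\mathbb{C}}$, since $q$ is a trace point — is the strict transform of a unique isolated separatrix of $\mathcal F_{\mathbb{C}}$, which carries coefficient $1$; hence $n_i=t_i$, the number of trace singularities of $\tilde{\mathcal F}_{\mathbb{C}}$ on $D_i$.

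Next I would prove, by induction on the length of $\sigma_{\mathbb{C}}$, the companion formula $\nu_0(\mathcal F_{\mathbb{C}})+1=\sum_{D_i\subset\D_{\mathbb{C}}}\rho(D_i)\,\theta_i$. The base case is a simple singularity, where $\nu_0=1$, $\tau_0=0$, and $\mathcal B_{\mathbb{C}}$ is the sum of the two smooth separatrices, so the identity reduces to $1=2-1+0$. For the inductive step let $\pi$ be the first blow-up and $D=D_1$ its exceptional divisor, so $\rho(D_1)=1$. If $\pi$ is non-dicritical, the key local input is that, after pulling $\omega_{\mathbb{C}}$ back by $\pi$ and dividing out the maximal power of the exceptional coordinate, the coefficient of the $D$-direction differential restricts on $D\simeq\mathbb{P}^{1}$ to a section of a line bundle of degree $\nu_0(\mathcal F_{\mathbb{C}})+1$; this yields $\nu_0(\mathcal F_{\mathbb{C}})+1=\sum_{q\in D}Z_q$, where $Z_q$ is the order of vanishing of $\pi^{*}\mathcal F_{\mathbb{C}}$ along the invariant curve $D$ at $q$. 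Applying the induction hypothesis to the germs of $\pi^{*}\mathcal F_{\mathbb{C}}$ at the non-simple points of $D$, and using that (i) at a simple singularity lying on an invariant component one has $Z_q=1$ unless that singularity is a saddle-node whose weak separatrix is that very component, in which case $Z_q=\iota_q^{w}$, (ii) such a tangent saddle-node sitting at a singular point of $D$ contributes $\iota_q^{w}-1$ to $\tau_{0}(\mathcal F_{\mathbb{C}})$ but nothing to the tangency excess of the (already simple) germ, and (iii) the weight of a component created over a point $q\in D$ is the same whether computed at $0$ or at $q$, one arrives at $\theta_i=n_i+\sum_{q}(\iota_q^{w}-1)$ for non-dicritical $D_i$, the sum ranging over the tangent saddle-nodes of $\tilde{\mathcal F}_{\mathbb{C}}$ whose weak separatrix lies in $D_i$. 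If $\pi$ is dicritical, the same scheme with the tangency order $\tang_q(\pi^{*}\mathcal F_{\mathbb{C}},D)$ in place of $Z_q$, based on $\sum_{q\in D}\tang_q(\pi^{*}\mathcal F_{\mathbb{C}},D)=\nu_0(\mathcal F_{\mathbb{C}})-1$ and the balanced condition, gives $\theta_i=2-\val(D_i)=n_i$ (there are no saddle-nodes on a dicritical component). Summing over all components and recalling the definition of $\tau_{0}$ gives
\[\nu_0(\mathcal F_{\mathbb{C}})+1=\sum_{D_i\subset\D_{\mathbb{C}}}\rho(D_i)\,n_i+\sum_{q\in{\rm SN}(\tilde{\mathcal F}_{\mathbb{C}})}\rho(D_{q})\bigl(\iota_q^{w}-1\bigr)=\nu_0(\mathcal B_{\mathbb{C}})+\tau_{0}(\mathcal F_{\mathbb{C}}),\]
which is the assertion.

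The step I expect to be the real work is the companion formula for $\nu_0(\mathcal F_{\mathbb{C}})$: one must control, under a single blow-up, the \emph{joint} behaviour of the algebraic multiplicity, of the vanishing orders $Z_q$ along the exceptional divisor, and of the weights, and then check that when the local contributions $\theta_i$ are summed over the tree-shaped divisor $\D_{\mathbb{C}}$ the terms attached to the corners cancel telescopically, leaving only the trace counts $n_i$ and the saddle-node excesses. Handling uniformly the various simple models that can occur on the divisor — non-degenerate versus saddle-node, trace point versus corner, weak versus strong separatrix along a given component — is where the bookkeeping is heaviest; everything else is essentially combinatorics on the dual graph together with the two elementary blow-up identities for the multiplicity and the tangency order.
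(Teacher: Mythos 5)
The paper itself offers no proof of this proposition --- it is imported as a black box from \cite{genzmer2007} --- so there is nothing internal to compare your argument against; it must stand on its own. Its first half does: the identity $\nu_0(\B_{\C})=\sum_i\rho(D_i)n_i$, with $n_i$ equal to the number of trace singularities on a non-dicritical component and to $2-\val(D_i)$ on a dicritical one, is correct, and the overall plan of matching this against a $\rho$-weighted expansion of $\nu_0(\F_{\C})+1$ over $\D_{\C}$ is a legitimate route. The difficulty is that the inductive step establishing that second expansion --- which is where the entire content of the proposition lives --- rests on two auxiliary claims that are false as stated.

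First, your claim (iii), that the weight of a component created over $q\in D$ is the same whether computed at $0$ or at $q$, fails. Take a foliation resolved like the cusp $y^2=x^3$: the last component $D_3$ has $\rho(D_3)=2$ computed at the origin (its curvette is a branch of multiplicity $2$), but a curvette of $D_3$ pushes down to a \emph{smooth} branch at the point $q\in D_1$ over which $D_3$ sits (smooth but tangent to $D_1$), so the weight computed at $q$ is $1$. The discrepancy is exactly $(\pi^{*}C\cdot D_1)_q-\nu_q(\pi^{*}C)$, the tangency of the curvette with the exceptional line, and it is generically nonzero. Second, to feed the induction hypothesis $\nu_q(\pi^{*}\F)+1=\sum_j\rho_q(D_j)\theta_j$ into your identity $\nu_0(\F)+1=\sum_{q\in D}Z_q$ you implicitly need $Z_q=\nu_q(\pi^{*}\F)+1$ at each non-simple point $q$, and this also fails: for the germ $(u^4+x^2)\,dx+xu\,du$ with invariant line $\{x=0\}$ one has $\nu_q=2$ but $Z_q=4$. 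The two errors are of the same nature (higher-order tangency with $D$ at $q$, of curvettes in one case and of the foliation in the other) and in the true formula they must cancel against each other --- but exhibiting that cancellation is precisely the bookkeeping you defer to your final paragraph, so the companion formula, and hence the proposition, is not actually proved. A workable repair is to induct directly on the identity $\nu_0(\F)=\nu_0(\B)-1+\tau_0(\F)$ itself, transforming $\nu_0(\B)$ under one blow-up via Noether's formula $\nu_0(B)=(\pi^{*}B\cdot D)$ and using that a balanced divisor at $0$ induces balanced divisors at the points of $D$ (the content of \cite[Lem. 3.11]{genzmer2018}, already invoked in the proof of Lemma \ref{lem-indicdicbald}); this keeps all multiplicities anchored at a single center and avoids comparing weights computed at different points.
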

Since $\tau_{\C,0}(\F_{\C}) \geq 0$, with equality if and only if 	$\mathcal{F}_\mathbb{C}$ is a $\C$-second type   foliation, the proposition can be read in the following way:
$$\nu_0(\mathcal{B}_\mathbb{C}) \leq \nu_0(\mathcal{F}_\mathbb{C})	+1,$$
with equality happening
if and only if 	$\mathcal{F}_\mathbb{C}$ is  $\C$ST.

Taking into account again the $\tilde{J}$-symmetry of $\tilde{\F}_{\C}$, we can easily see that
if $D_{\C} \subset \D_{\C}$ is a dicritical component, then $D_{\C}^\vee= \tilde{J}(D_{\C})$
is also a dicritical component and $\val (D_{\C}) = \val (D_{\C}^\vee)$. It is also clear that
if $D_{\C}^{\vee} =  D_{\C}$, then $D_{\R} =  D_{\C} \cap \tilde{M}_{\R}$
is a dicritical component for the   reduction of singularities $\sigma_{\R}$.
We can therefore
 produce
a balanced divisor of separatrices $\B_{\C}$ with the following two  conditions:
\begin{enumerate}[label=\roman*)]
\item   $B_{\C}$ and $B_{\C}^{\vee} = J(B_{\C})$
have the same coefficient for every $B_{\C} \in \sep(\F_{\C})$;
\item if $D_{\C} \subset \D_{\C}$ is a dicritical component such that $D_{\C}= D_{\C}^\vee$, then every $B_{\C} \in \sep(D_{\C})$ with $a_{B_{\C}} \neq 0$ is in $\sep^{r}(\F_{\C})$.
\end{enumerate}
Keeping coherence with our terminology, we call such a balanced
divisor   \emph{$J$-symmetric}.
We have a decomposition
\begin{equation}
\label{eq-decompostion-balanced}
\B_{\C} = \B_{\C}^{r} + \B_{\C}^{c},
\end{equation}
where $\B_{\C}^{r}$ comprises all separatrices in $\sep_{\C}^{r}(\F_{\C})$ and
$\B_{\C}^{c}$ those in $\sep_{\C}^{c}(\F_{\C})$. By means of the identification
 $\sep^{r}(\F_{\C}) \simeq \sep(\F_{\R}) $,
the divisor  $\B_{\C}^{r}$ corresponds to a  divisor of separatrices for $\F_{\R}$.
It is not, in principle,
 an $\R$-balanced divisor of separatrices for $\F_{\R}$,
since the real and complex valences of corresponding dicritical  components may differ. However, given a $J$-symmetric balanced divisor $\B_{\C}$ for
$\F_{\C}$, we can produce, for $\F_{\R}$, a balanced divisor
$\B_{\R}$ such that every non-zero coefficient of   $B_{\R} \in \sep(\F_{\R})$ coincides with that
of the corresponding   $B_{\C} \in \sep(\F_{\C})$ in $\B_{\C}$.
Abusing terminology, we will say that $\B_{\R}$ is \emph{contained} in $\B_{\C}$.
% and we will denote this fact by $\B_{\R} \subset \B_{\C}$.

\begin{prop}
\label{prop-equal-mod2}
Let $X_{\R}$ be a germ of real analytic vector field at $(\R^{2},0)$, $X_{\C}$ be its complexification, $\F_{\R}$ and $\F_{\C}$ be the foliations associated to them. If $\B_{\R}$ and $\B_{\C}$ are balanced divisors of separatrices for $\F_{\R}$ and $\F_{\C}$, then
\begin{equation*}
	\nu_0 (\mathcal{B_\mathbb{R}}) = \nu_0 (	\mathcal{B_\mathbb{C}})     %\text{ mod } 2
	\pmod{2}.
\end{equation*}
\end{prop}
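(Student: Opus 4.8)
The plan is to evaluate $\nu_0(\B_\R)$ and $\nu_0(\B_\C)$ explicitly in terms of the fixed reductions of singularities and then to compare them modulo $2$ by means of the involution $\tilde{J}$. The starting point is the remark that a separatrix appearing with nonzero coefficient in a balanced divisor has order equal to the weight of the component of the divisor it reaches: for an isolated separatrix $B_\K$ this component is the non-dicritical $D_{\K,q}$ carrying the corresponding trace singularity $q$, since at a simple singularity the separatrix not contained in the divisor is smooth and transversal to $D_{\K,q}$; for a dicritical separatrix it is the dicritical component it meets, at a trace point of it, because $\sigma_\K$ desingularizes $\sep(\F_\K)$ and hence $\tilde{\F}_\K$ is transversal to that component. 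Using in addition that isolated separatrices have coefficient $1$ and are in bijection with the trace singularities of $\tilde{\F}_\K$, and that $\sum_{B_\K\in\sep(D_\K)}a_{B_\K}=2-\val(D_\K)$ on each dicritical $D_\K$, one obtains
\[
\nu_0(\B_\K)\;=\;\sum_{q}\rho(D_{\K,q})\;+\!\!\sum_{D_\K\ \text{dicritical}}\!\!\rho(D_\K)\bigl(2-\val(D_\K)\bigr),
\]
the first sum running over the trace singularities $q$ of $\tilde{\F}_\K$. In particular the right-hand side manifestly does not depend on the chosen balanced divisor.

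Next, split the right-hand side for $\K=\C$ along the orbits of $\tilde{J}$. The components of $\D_\C$ that are not $\tilde{J}$-invariant, and the trace singularities of $\tilde{\F}_\C$ that are not $\tilde{J}$-fixed, come in $\tilde{J}$-conjugate pairs; since $\tilde{J}$ preserves the combinatorial data of the reduction, conjugate components share the same weight and valence, and conjugate trace singularities lie on $\tilde{J}$-conjugate components, so each such pair contributes an even integer. Therefore, modulo $2$, only the $\tilde{J}$-fixed trace singularities and the $\tilde{J}$-invariant dicritical components survive. These match the data of the real reduction: a $\tilde{J}$-fixed trace singularity of $\tilde{\F}_\C$ is a trace singularity of $\tilde{\F}_\R$ lying on the real trace of its ($\tilde{J}$-invariant) component, and conversely; the $\tilde{J}$-invariant dicritical components of $\D_\C$ are precisely the complexifications of the dicritical components of $\D_\R$; and for corresponding components one has $\rho(D_\R)=\rho(D_\C)$ and $\val(D_\R)\equiv\val(D_\C)\pmod 2$, as already observed. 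Substituting these relations, the surviving part of $\nu_0(\B_\C)$ becomes $\sum_{q'}\rho(D_{\R,q'})+\sum_{D_\R\ \text{dic}}\rho(D_\R)(2-\val(D_\R))$, which by the same formula equals $\nu_0(\B_\R)$. Hence $\nu_0(\B_\C)\equiv\nu_0(\B_\R)\pmod 2$.

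I expect the delicate point to be the orbit bookkeeping for $\tilde{J}$: one must check that every trace singularity of $\tilde{\F}_\R$ really arises from a $\tilde{J}$-fixed trace singularity of $\tilde{\F}_\C$ located at a smooth, non-corner point of $\D_\C$; that every dicritical component of $\D_\R$ is the real trace of a $\tilde{J}$-invariant dicritical component of $\D_\C$; and that no $\tilde{J}$-conjugate pair of components of $\D_\C$ meets at a real point. All of this follows from $\sigma_\R$ being the restriction of the $\tilde{J}$-symmetric $\sigma_\C$, but it must be verified so that the two sums are compared term by term. The auxiliary identity $\nu_0(B_\K)=\rho(D_\K)$ for a separatrix whose strict transform meets a component $D_\K$ transversally, which underpins the displayed formula, is a direct consequence of the definition of the weight.
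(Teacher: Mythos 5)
Your proof is correct, and it reaches the conclusion by a genuinely different bookkeeping than the paper's. The paper fixes a $J$-symmetric balanced divisor $\B_{\C}$ with $\B_{\R}$ contained in it, splits $\B_{\C}=\B_{\C}^{r}+\B_{\C}^{c}$ (and each piece into isolated and dicritical parts), kills $\nu_{0}(\B_{\C}^{c})$ modulo $2$ by pairing conjugate separatrices, and matches $\B_{\C}^{r}$ with $\B_{\R}$ via the bijection $\sep^{r}(\F_{\C})\simeq\sep(\F_{\R})$ together with $\val(D_{\R})\equiv\val(D_{\C})\pmod 2$. You instead first prove the closed formula $\nu_0(\B_\K)=\sum_{q}\rho(D_{\K,q})+\sum_{D_\K\,\text{dic}}\rho(D_\K)\bigl(2-\val(D_\K)\bigr)$, which reduces everything to the combinatorial data of the reduction of singularities, and then run the $\tilde{J}$-pairing on components and trace singularities rather than on separatrices. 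What this buys: the independence of $\nu_0(\B_\K)$ from the choice of balanced divisor (which the paper only remarks) becomes manifest, and you never need to select a $J$-symmetric $\B_\C$ or a compatible $\B_\R$; the statement then holds for arbitrary balanced divisors without the paper's opening reduction ``we can suppose\dots''. The point you correctly flag as delicate --- that every trace singularity of $\tilde{\F}_\R$ comes from a $\tilde{J}$-fixed trace singularity of $\tilde{\F}_\C$ at a non-corner point, and that conjugate pairs of components carry no real points --- does hold: a component created by blowing up a non-$\tilde{J}$-fixed center maps onto that center under the intermediate projection, so it cannot contain fixed points of the lifted involution, and this persists under further blow-ups; the paper absorbs the same issue implicitly into the identifications $\iso^{r}(\F_{\C})\simeq\iso(\F_{\R})$ and $\dic^{r}(\D_{\C})\leftrightarrow\dic(\D_{\R})$. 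Your auxiliary identity $\nu_0(B_\K)=\rho(D_\K)$ for a branch whose strict transform is transversal to $D_\K$ at a trace point is indeed immediate from the paper's definition of the weight, and it applies both to isolated separatrices (transversal at a simple trace singularity) and to dicritical ones (transversality guaranteed because $\sigma_\K$ desingularizes $\sep(\F_\K)$), so the displayed formula is justified.
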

\begin{proof}
 We can suppose   that $\B_{\C}$ is $J$-symmetric and that $\B_{\R}$ is contained in $\B_{\C}$.
Denoting $\iso^{r}(\F_{\C}) = \iso(\F_{\C}) \cap \sep^{r}(\F_{\C})$ and
$\iso^{c}(\F_{\C}) = \iso(\F_{\C}) \cap \sep^{c}(\F_{\C})$,
%we can split
%the non-dicritical part in real and purely complex separatrices in the following way:
we have a splitting:
\[ \B_{\C,\text{iso}} =  \sum_{B_{\C} \in \rm \iso^{r}(\F_{\C})} a_{B_{\C}} \cdot  B_{\C} +  \sum_{B_{\C} \in \rm \iso^{c}(\F_{\C})} a_{B_{\C}} \cdot B_{\C} = \B_{\C, \text{iso}}^{r} + \B_{\C, \text{iso}}^{c} . \]
For symmetry reasons,    $\nu_{0}( \B_{\C , {\rm iso}}^{c}  ) $ is even.
On the other hand, separatrices    in $\iso^{r}(\F_{\C})$ are in one to one correspondence
with those in $\iso(\F_{\R})$.
%by the intersection with the real trace:
%$\B_{\C} \mapsto \B_{\R} = \B_{\C} \cap \R^{2}$.
Therefore
$\nu_{0}(\B_{\C , \text{iso}}^{r}) = \nu_{0}(\B_{\R , \text{iso}})$, allowing us to conclude
that
\begin{equation}
\label{eq-nu-div-iso}
 \nu_{0}(\B_{\C, \text{iso}}) = \nu_{0}(\B_{\R, \text{iso}}) \pmod{2}.
\end{equation}
Regarding $\B_{\C,\text{dic}}$,
we decompose, as before,
$ \B_{\C,\text{dic}} =
\B_{\C, \text{dic}}^{r} + \B_{\C, \text{dic}}^{c} $. We
also separate
the dicritical components of $\D_{\C}$ in $\dic(\D_{\C}) = \dic^{r}(\D_{\C}) \cup  \dic^{c}(\D_{\C})$, according to the invariance by the involution $\tilde{J}$.
Due to  symmetry,
if $D_{\C} \in \dic^{c}(\D_{\C})$, then $D_{\C}^{\vee} = \tilde{J}(D_{\C}) \in \dic^{c}(\D_{\C})$
and $\val(D_{\C}) = \val(D_{\C}^{\vee})$. This implies that $\nu_{0}(\B_{\C, \text{dic}}^{c})$ is even.
On the other hand, for   $D_{\C} \in \dic^{r}(\D_{\C})$ and $D_{\R}  = D_{\C} \cap \R^{2}$,  we have,
as noted before, $\val(D_{\C}) \equiv  \val(D_{\R}) \pmod{2}$.
As a consequence,
\begin{equation}
\label{eq-nu-div-dic}
 \nu_{0}(\B_{\C, \text{dic}}) = \nu_{0}(\B_{\R, \text{dic}}) \pmod{2}.
 \end{equation}
Equations \eqref{eq-nu-div-iso} and \eqref{eq-nu-div-dic} prove the result.
\end{proof}

%\begin{prop}
%\label{prop-nu-real-2type}
%Let $\F_{\C}$ be the complexification of a germ of real analytic  foliation $\F_{\R}$ at $(\R^2, 0)$.
%Let $\B_{\C}$ be a $J$-symmetric balanced divisor of separatrices for $\F_{\C}$.
% If $\F_{\R}$ is
%$\R$-second type,
%then
%\[\nu_0(\B_\C) \equiv \nu_0(\F_\C)	+1 \pmod{2}.\]
%\end{prop}
%\begin{proof}
%The fact that $\F_{\C}$ is $J$-symmetric implies that its strict transform by
%a reduction of singularities is $\tilde{J}$-symmetric, where $\tilde{J}$ is the lifting of the
%conjugation map $J$. This implies that tangent saddle nodes outside the real trace appear in pairs
%with the same weak indices. Thus,
%\begin{equation}
%\label{eq-tau-mod2}
% \tau_{\C,0} (\F_{\C}) \equiv \tau_{\R,0} (\F_{\R}) \pmod{2}.
%\end{equation}
%On the other hand, $\tau_{\R,0} (\F_{\R})=0$, since $\F_{\R}$ is $\R$-second type.
%The result then follows from Proposition \ref{prop-mult-balanced}.
%\end{proof}

We observe  that, since  $\tilde{\F}_{\C}$  is $\tilde{J}$-symmetric, tangent saddle nodes outside the real trace appear in pairs and have the
 same weak indices. Thus,
\begin{equation}
\label{eq-tau-mod2}
 \tau_{0} (\F_{\C}) \equiv \tau_{0} (\F_{\R}) \pmod{2}.
\end{equation}
This is an ingredient for the next
 proposition, which gives sufficient conditions
for the existence of separatrices for germs of real analytic vector fields:
\begin{prop}
\label{prop-parity-separtrix}
 Let $X_{\R}$ be a germ of real analytic vector field at $(\R^{2},0)$  inducing a foliation $\F_{\R}$.
Suppose that the algebraic multiplicity
$\nu_{0}(\F_{\R})$ and the tangency excess $\tau_{0}(\F_{\R})$ have the same parity.
Then $\F_{\R}$ admits a
formal separatrix.
\end{prop}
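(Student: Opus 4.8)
The plan is to reduce the statement to a parity count on a balanced divisor of separatrices for $\F_{\R}$, using Proposition \ref{prop-equal-mod2} to transfer the computation to the complexification $\F_{\C}$, where Proposition \ref{prop-mult-balanced} gives an exact formula. First I would choose a balanced divisor of separatrices $\B_{\R}$ for $\F_{\R}$ and a $J$-symmetric balanced divisor $\B_{\C}$ for $\F_{\C}$ containing it, as constructed in the paragraphs preceding Proposition \ref{prop-equal-mod2}. The key identity is $\nu_{0}(\F_{\C}) = \nu_{0}(\B_{\C}) - 1 + \tau_{0}(\F_{\C})$ from Proposition \ref{prop-mult-balanced}. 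Reading this modulo $2$ and combining with $\nu_{0}(\F_{\R}) = \nu_{0}(\F_{\C})$, with $\tau_{0}(\F_{\C}) \equiv \tau_{0}(\F_{\R}) \pmod 2$ from \eqref{eq-tau-mod2}, and with $\nu_{0}(\B_{\R}) \equiv \nu_{0}(\B_{\C}) \pmod 2$ from Proposition \ref{prop-equal-mod2}, I obtain
\[
\nu_{0}(\B_{\R}) \equiv \nu_{0}(\F_{\R}) + 1 + \tau_{0}(\F_{\R}) \pmod 2.
\]
Under the hypothesis $\nu_{0}(\F_{\R}) \equiv \tau_{0}(\F_{\R}) \pmod 2$, the two terms $\nu_{0}(\F_{\R})$ and $\tau_{0}(\F_{\R})$ cancel modulo $2$, leaving $\nu_{0}(\B_{\R}) \equiv 1 \pmod 2$; in particular $\nu_{0}(\B_{\R})$ is odd, hence nonzero.

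Next I would argue that a balanced divisor of separatrices with nonzero order cannot be empty, so $\B_{\R}$ is supported on at least one separatrix $B_{\R} \in \sep(\F_{\R})$. Actually one should be slightly careful here: the order $\nu_{0}(\B_{\R})$ is a signed sum $\sum a_{B_{\R}} \nu_{0}(B_{\R})$, and coefficients may be $-1$; but if the support were empty the order would be $0$, contradicting oddness. Hence the support is nonempty and $\F_{\R}$ possesses at least one separatrix, which is the (possibly formal) separatrix asserted in the statement. I would remark that the separatrix produced here need not be convergent, consistently with the example reproduced from \cite{molsanz2019} in the introduction.

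The main subtlety — really the only place where something could go wrong — is the legitimacy of the passage from the complex formula to the real parity count, i.e. making sure that the balanced divisors $\B_{\R}$ and $\B_{\C}$ can be chosen compatibly (with $\B_{\R}$ contained in $\B_{\C}$ and $\B_{\C}$ being $J$-symmetric) so that Proposition \ref{prop-equal-mod2} applies verbatim. This has, however, already been arranged in the discussion preceding that proposition: the $J$-symmetric balanced divisor exists because dicritical components come in $\tilde J$-conjugate pairs or are themselves $\tilde J$-invariant, and in the latter case one can select the supporting separatrices among the real ones. So no new work is needed there; the proof is essentially a two-line congruence manipulation plus the observation that a divisor of nonzero order has nonempty support. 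I do not expect any genuine obstacle, the content having been front-loaded into Propositions \ref{prop-mult-balanced} and \ref{prop-equal-mod2} and equation \eqref{eq-tau-mod2}.
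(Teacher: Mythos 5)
Your proof is correct and follows exactly the route the paper takes: combine Proposition \ref{prop-mult-balanced} read modulo $2$ with the congruences $\tau_{0}(\F_{\C}) \equiv \tau_{0}(\F_{\R})$ from \eqref{eq-tau-mod2} and $\nu_{0}(\B_{\R}) \equiv \nu_{0}(\B_{\C})$ from Proposition \ref{prop-equal-mod2} to conclude $\nu_{0}(\B_{\R}) \equiv 1 \pmod 2$, hence nonempty support. Your extra remark that a signed divisor of odd order must have nonempty support is a sensible (if implicit in the paper) point of care; no discrepancy with the paper's argument.
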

\begin{proof} We have, by \eqref{eq-tau-mod2}, that $\tau_{0} (\F_{\C})$ and  $\tau_{0} (\F_{\R})$
have the same parity.
Since $\nu_{0}(\F_{\C})$ and $ \nu_{0}(\F_{\R})$ are equal by definition, the proposition's hypothesis gives that
$\nu_{0}(\F_{\C})$ and  $\tau_{0}(\F_{\C})$ have the same parity.
By Proposition \ref{prop-mult-balanced}, we then have that
$\nu_{0}(\B_{\C}) = 1 \pmod 2$. Finally, by Proposition \ref{prop-equal-mod2},
$\nu_{0}(\B_{\R}) = 1 \pmod 2$. This gives that $\B_{\R}$ has non-empty support, assuring   the existence of a separatrix for $\F_{\R}$.
\end{proof}

The main result of this section comes as consequence of this proposition:
\begin{teorA} Let $X_{\R}$ be a germ of real analytic vector field at $(\R^{2},0)$ with associated foliation
$\F_{\R}$. Suppose that $\F_{\R}$ is a real topological generalized curve foliation.
If $\nu_{0}(\F_{\R})$ is even,
then $\F_{\R}$ has a formal separatrix.
\end{teorA}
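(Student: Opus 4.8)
The plan is to obtain Theorem A as an immediate consequence of Proposition \ref{prop-parity-separtrix}, whose hypothesis is precisely that $\nu_{0}(\F_{\R})$ and $\tau_{0}(\F_{\R})$ have the same parity. Since $\nu_{0}(\F_{\R})$ is assumed even, the entire content of the argument reduces to checking that $\tau_{0}(\F_{\R})$ is even whenever $\F_{\R}$ is a topological $\R$-generalized curve foliation; then the two parities agree and Proposition \ref{prop-parity-separtrix} yields a formal separatrix.

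To verify the evenness of $\tau_{0}(\F_{\R})$ I would argue exactly as in the Remark following Definition \ref{def-tang-exc}. By the definition of a topological $\R$GC foliation, the reduction of singularities $\sigma_{\R}$ of $\F_{\R}$ produces no topological saddle-nodes; hence every tangent saddle-node $q \in {\rm SN}(\tilde{\F}_{\R})$ is, from the topological viewpoint, a saddle or a node, which by \cite[Th. 9.1]{ilyashenko2008} forces its weak index $\iota_{q}^{w}(\tilde{\F}_{\R})$ to be odd. Therefore each summand $\rho(D_{\R,q})\bigl(\iota_{q}^{w}(\tilde{\F}_{\R})-1\bigr)$ in the defining formula for $\tau_{0}(\F_{\R})$ is an integer multiple of the even number $\iota_{q}^{w}(\tilde{\F}_{\R})-1$, so it is even, and consequently $\tau_{0}(\F_{\R})$ is even. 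Combined with the hypothesis that $\nu_{0}(\F_{\R})$ is even, this gives $\nu_{0}(\F_{\R}) \equiv \tau_{0}(\F_{\R}) \pmod{2}$, and Proposition \ref{prop-parity-separtrix} applies directly.

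There is no real obstacle at the level of Theorem A itself: all the difficulty has been front-loaded. The substantive work lies inside Proposition \ref{prop-parity-separtrix}, and beneath it in the parity-transfer statement of Proposition \ref{prop-equal-mod2} together with Genzmer's formula $\nu_{0}(\F_{\C}) = \nu_{0}(\B_{\C}) - 1 + \tau_{0}(\F_{\C})$ from Proposition \ref{prop-mult-balanced}. Tracing the chain to its core, the genuinely delicate ingredients are the construction of a $J$-symmetric balanced divisor of separatrices $\B_{\C}$ for $\F_{\C}$ and the verification that the divisor $\B_{\R}$ it contains transports the relevant parity, which rests on the congruence $\val(D_{\R}) \equiv \val(D_{\C}) \pmod{2}$ for $\tilde{J}$-invariant components and on the fact that purely complex separatrices, purely complex dicritical components of $\D_{\C}$, and tangent saddle-nodes off the real trace all occur in conjugate pairs with equal contributions. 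Granting these (they are established earlier), the proof of Theorem A is the short parity bookkeeping described above.
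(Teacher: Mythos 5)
Your proposal is correct and follows exactly the paper's own argument: Theorem A is deduced from Proposition \ref{prop-parity-separtrix} by observing, as in the Remark after Definition \ref{def-tang-exc}, that a topological $\R$GC foliation has only tangent saddle-nodes with odd weak index, so $\tau_{0}(\F_{\R})$ is even and matches the parity of $\nu_{0}(\F_{\R})$. Your accounting of where the substantive work lies (Propositions \ref{prop-equal-mod2} and \ref{prop-mult-balanced} and the $J$-symmetric balanced divisor) is also accurate.
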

\begin{proof} This is a consequence of the fact that $\tau_{0}(\F_{\R})$ is even for a real
topological generalized curve foliation   $\F_{\R}$.
\end{proof}

Observing that an  $\R$GC  foliation is also a topological $\R$GC foliation,   and
that all its separatrices are analytic,
we can recover the following result from   \cite{Risler2001}:
\begin{cor}
\label{cor-nu-RGC}
If  $\F_{\R}$ is an $\R$-generalized curve foliation with $\nu_{0}(\F_{\R})$ is even,
then  it admits an
analytic separatrix.
\end{cor}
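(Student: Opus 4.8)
The plan is to deduce Corollary \ref{cor-nu-RGC} directly from Theorem A together with the basic structure of the reduction of singularities of an $\R$-generalized curve foliation. First I would observe that, by definition, an $\R$GC foliation $\F_{\R}$ has no saddle-nodes whatsoever in its reduction of singularities; in particular it has no topological saddle-nodes, so $\F_{\R}$ is \emph{a fortiori} a topological $\R$-generalized curve foliation. Hence the hypothesis $\nu_{0}(\F_{\R})$ even puts us exactly in the situation covered by Theorem A, which yields a (possibly formal) separatrix $B_{\R}$ for $\F_{\R}$.

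The remaining point is to upgrade ``formal'' to ``analytic''. Here I would use the dichotomy, recalled in Section \ref{section-definitions}, that every purely formal separatrix of a $\K$-analytic foliation arises as the weak (central) separatrix of a saddle-node singularity of the strict transform $\tilde{\F}_{\K}$ that is not contained in the divisor $\D_{\K}$. Since an $\R$GC foliation has \emph{no} saddle-nodes in its reduction of singularities at all, there are no such weak separatrices, and therefore $\F_{\R}$ has no purely formal separatrices: every element of $\sep(\F_{\R})$ is convergent. Combining this with the separatrix produced by Theorem A, we conclude that $B_{\R}$ is in fact analytic, which is the assertion of the corollary.

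I do not expect any genuine obstacle here: the corollary is essentially a specialization of Theorem A, and the only thing to check carefully is that the ``formal $\Rightarrow$ analytic'' passage is legitimate, which follows from the structural description of separatrices in the $\R$GC case (equivalently, from $\tau_{0}(\F_{\R}) = 0$ and the fact that in the generalized curve case no saddle-node weak separatrices can appear off the divisor). One small point worth stating explicitly in the write-up is that the evenness of $\tau_{0}(\F_{\R})$ — which is what Theorem A's proof invokes — holds trivially here because $\tau_{0}(\F_{\R}) = 0$, so the hypothesis of Proposition \ref{prop-parity-separtrix} (same parity of $\nu_{0}$ and $\tau_{0}$) is met as soon as $\nu_{0}(\F_{\R})$ is even.
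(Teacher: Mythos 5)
Your proposal is correct and follows exactly the paper's route: the paper likewise notes that an $\R$GC foliation is \emph{a fortiori} a topological $\R$GC foliation, applies Theorem A to get a formal separatrix, and then uses the fact (recalled in Section \ref{section-definitions}) that purely formal separatrices only arise as weak separatrices of saddle-nodes in the reduction of singularities, which are absent in the $\R$GC case, so every separatrix is analytic. Your extra remark that $\tau_{0}(\F_{\R})=0$ trivially satisfies the parity hypothesis of Proposition \ref{prop-parity-separtrix} is consistent with, though not needed beyond, the paper's argument.
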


A germ of non-dicritical foliation $\F_{\R}$ at $(\R^{2},0)$ is of \emph{center-focus} or
of \emph{monodromic} type if
the following equivalent conditions happen (see, for instance, \cite[Th. 9.13]{ilyashenko2008}):
\begin{itemize}
\item  for every  smooth $\R$-analytic semi-branch  $\Gamma^{+}$ at $(\R^{2},0)$ there is
a first return map $\rho: \Gamma^{+} \to \Gamma^{+}$ (the \emph{monodromy map});
\item  there are no \emph{characteristic orbits}, i.e., leaves that accumulate to $0 \in \R^{2}$
with   well defined tangent at the origin;
\item the reduced model  $\tilde{\F}_{\R}$ has no trace singularities --- and thus $\sep(\F_{\R})$ is empty --- and all corner singularities
are topological saddles.
\end{itemize}
The third of the above characterizations means, in particular,  that a center-focus foliation
is a topological $\R$GC foliation.
The following fact is then an easy consequence of Theorem A
(see \cite[Cor. 3.5]{Risler2001}):
\begin{cor}
\label{cor-nu-centerfocus}
If  $\F_{\R}$ is a center-focus foliation, then $\nu_{0}(\F_{\R})$ is odd.
\end{cor}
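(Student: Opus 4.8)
The plan is to derive this as an immediate consequence of Theorem A by contraposition. The starting observation is that a center-focus foliation is, as remarked just above the statement, a topological $\R$GC foliation: by the third characterization in the definition of center-focus type, the reduced model $\tilde{\F}_{\R}$ has no trace singularities and all its corner singularities are topological saddles, so in particular $\tilde{\F}_{\R}$ contains no topological saddle-nodes, which is exactly the condition of Definition \ref{def-topological-saddle-node}. Hence Theorem A applies to $\F_{\R}$.

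The second ingredient is that $\sep(\F_{\R}) = \emptyset$ for a center-focus foliation — this too is part of the third characterization (no trace singularities forces, via the correspondence between isolated separatrices and trace singularities together with the absence of dicritical components, that there are no separatrices at all). Now suppose, for contradiction, that $\nu_{0}(\F_{\R})$ is even. Then Theorem A yields a (possibly formal) separatrix for $\F_{\R}$, i.e. $\sep(\F_{\R}) \neq \emptyset$, contradicting the previous sentence. Therefore $\nu_{0}(\F_{\R})$ must be odd.

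There is no real obstacle here; the only point requiring a moment's care is the bookkeeping that ties the center-focus hypothesis to the two facts used — that $\F_{\R}$ is a topological $\R$GC foliation and that it carries no separatrix — both of which are already recorded in the list of equivalent conditions defining monodromic foliations. Once those are in hand, the corollary is the contrapositive of Theorem A applied to the empty separatrix set.
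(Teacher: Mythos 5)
Your proof is correct and is essentially the paper's own argument: the paper likewise notes that the third characterization of center-focus type makes $\F_{\R}$ a topological $\R$GC foliation with empty separatrix set, and then invokes Theorem A to derive a contradiction from the assumption that $\nu_{0}(\F_{\R})$ is even. Nothing further is needed.
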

\begin{proof}
If $\nu_{0}(\F_{\R})$ were even, there would be a formal separatrix, giving a contradiction.
\end{proof}

\section{Local polar invariants of complex analytic foliations}
 \label{section-polar}

The main goal of this section is to prove Proposition \ref{prop-polar-mu-nu}.
It permits the calculation of the sum of the algebraic multiplicity and of the Milnor number
of a complex foliation in terms of an invariant of polar nature.
Our result   generalizes, for dicritical foliations, an inequality shown in
  \cite[Prop. 2]{cano2019}.
The results in this section apply to germs of $\C$-analytic foliations in general.
All objects here are complex and we permit ourselves to  lighten the notation
by omitting  the subscript ``$\C$''.

Let $X = P(x,y) \partial / \partial x + Q(x,y) \partial / \partial y$ be a germ of
complex analytic vector field at $(\C^{2},0)$
 and $\omega = P(x,y) dy- Q(x,y)dx$ be its dual $1-$form. Denote by  $\mathcal{F}$ be  the germ of foliation induced by them.
The \textit{polar curve}  of $\mathcal{F}$ with respect to $(a:b) \in \mathbb{P}^1$, where
$\mathbb{P}^1$ is the complex  projective line, is
the analytic curve $\Po_{(a:b)}  $ defined by the equation $aP-bQ=0$.
It consists of the sets of points where $X$ has inclination $a/b$ (when $b \neq 0$).
For generic $(a:b) \in \mathbb{P}^1$, the curves $\Po_{(a:b)}$ are reduced and
equisingular. These curves are called \emph{generic polar curves}.
For instance, it is evident that an $\F$-invariant component of $\Po_{(a:b)}$ must be a line through the origin
of inclination $a/b$. Thus, except for the trivial radial foliation, the generic polar curve $\Po_{(a:b)}$ is free from  $\F$-invariant    components. It is also clear that the equality
  $\nu_{0}(\Po_{(a:b)}) =  \nu_{0}(\F)$ is   true for a generic polar curve.
We refer the reader to \cite{corral2003,cano2019} for a  more extensive discussion on polar curves and polar invariants of foliations.

%
%We will be chiefly interested in properties of $\Po_{(a:b)}$ for generic values of $(a:b) \in \mathbb{P}^1$.
%For instance, it is evident that an $\F$-invariant component of $\Po_{(a:b)}$ must be a line through the origin
%of inclination $a/b$. Thus, except for the trivial radial foliation, generically, the polar curve $\Po_{(a:b)}$ is free from  $\F$-invariant    components. It is also clear that the equality
%  $\nu_{0}(\Po_{(a:b)}) =  \nu_{0}(\F)$ is generically true.
%We refer the reader to \cite{cano2019} for a  more extensive discussion on polar curves and polar invariants of foliations.

Let $B$ be a germ of separatrix of $\F$ at  $(\C^{2},0)$,   with  parametrization
$\gamma_{B}(t)$. The \emph{polar intersection} of $\F $ with respect to $B$ (see  \cite{cano2019,genzmer2018})  is defined as
% the generic value, for $(a:b) \in \mathbb{P}^1$,  of
$$(\Po_{(a:b)} , B)_{0}= \ord_{t=0}\left((aP-bQ) \circ  \gamma_{B}(t)\right),$$
 where
$(\, \cdot \, , \, \cdot \, )_{0}$ stands for the intersection number of germs of complex formal curves
at $(\C,0)$
and $\Po_{(a:b)}$ is a generic polar curve.
We denote this number by $p_{0}(\F,B)$.
More generally, if $\B=\sum_{B \in  \sep(\F)}a_{B} \cdot B$ is a divisor of
separatrices of $\F$, we can define \emph{polar intersection} of $\F$ with respect to $\B$
in an additive way:
\[   p_{0}(\F,\B)   = \sum_{B \in  \sep(\F)} a_{B} \, p_{0}(\F,B) .\]
This definition will be used specifically in the case where $\B$ is a balanced divisor of separatrices
for  $\F$ at $0 \in \C^{2}$.

Let $\Gamma$ be a  formal branch at $(\C^{2},0)$,  non-invariant by $\F$, with  parametrization $\gamma(t)$.
The \emph{tangency order} of  $\F$ with respect to $\Gamma$ at $0 \in \C^{2}$ is the integer
\[ \tg_{0}(\F,\Gamma) = {\rm ord}_{t=0} \gamma^{*} \omega.\]
To a complex blow-up  $\pi :(\tilde{\C^{2}}, D  ) \to (\C^{2}, 0)$, we
 associate the following integer:
\begin{equation}
\label{eq-def-m}
 m=
\begin{cases}
\nu_{0}(\F) \ \ \text{if $\pi$ is non-dicritical} \medskip \\
\nu_{0}(\F)+1 \ \ \text{if $\pi$ is  dicritical}.
\end{cases}
\end{equation}
 Denote by $\tilde{\Gamma}  = \pi^{*} \Gamma$ and by  $\tilde{\F}  = \pi ^{*} \F$
  the strict transforms of   $\Gamma$ and
  $\F$, and set $q =  \tilde{\Gamma} \cap D$.
  The tangency orders $\tg_{0}(\F,\Gamma)$  and $\tg_{q}(\tilde{\F} ,\tilde{\Gamma})$
  are related in the following way:

\begin{prop}
\label{prop-blowup-tau}
Let  $\Gamma$ be a formal branch at $(\C^{2}, 0)$, non-invariant by the foliation $\F$.
Then
\[\tg_{0}(\F,\Gamma) = m \nu_{0}(\Gamma)  +  \tg_{q}(\tilde{\F} ,\tilde{\Gamma}),\]
where $m$, $\tilde{\F}$ , $\tilde{\Gamma}$ and $q$   are defined as above.
\end{prop}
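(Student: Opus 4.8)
The plan is to prove the formula by a direct local computation in the blow-up coordinates, comparing the order of vanishing of $\gamma^{*}\omega$ with that of $\tilde\gamma^{*}\tilde\omega$, where $\tilde\omega$ is a $1$-form defining $\tilde{\F}$ near $q$. First I would set $n = \nu_{0}(\F)$ and choose blow-up coordinates so that the branch $\Gamma$ is \emph{not} tangent to the exceptional divisor $D$ in the chosen chart, i.e. in the chart $\pi:(x,u)\mapsto(x,ux)$ the point $q$ is of the form $(0,u_{0})$ with $\nu_{0}(\Gamma)$ equal to the $x$-order of the parametrization; this is the generic situation and one may always arrange it by a linear change, since $\Gamma$ is a single branch. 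Write the parametrization of $\Gamma$ as $\gamma(t)=(x(t),y(t))$ with $\ord_{t}x(t)=\nu_{0}(\Gamma)=:k$; then the parametrization of $\tilde\Gamma$ is $\tilde\gamma(t)=(x(t),u(t))$ with $u(t)=y(t)/x(t)$, and $\tilde\gamma^{*}(x)$ still has order $k$.

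Next I would compute the pull-back of $\omega = P\,dy - Q\,dx$ under $\pi$. Using $y=ux$, $dy = u\,dx + x\,du$, one gets
\[
\pi^{*}\omega = \bigl(u\,P(x,ux) - Q(x,ux)\bigr)\,dx + x\,P(x,ux)\,du .
\]
The key algebraic fact is that $P(x,ux)$ and $Q(x,ux)$ are divisible by $x^{n}$, and that in the \emph{non-dicritical} case the combination $uP(x,ux)-Q(x,ux)$ is divisible exactly by $x^{n}$ (the exceptional divisor is invariant, and the order of $\tilde{\F}$ along $D$ is detected here), so $\pi^{*}\omega = x^{n}\tilde\omega$ with $\tilde\omega$ the $1$-form defining $\tilde{\F}$; in the \emph{dicritical} case $D$ is not invariant, $uP(x,ux)-Q(x,ux)$ is divisible by $x^{n+1}$, and $\pi^{*}\omega = x^{n+1}\tilde\omega$. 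In both cases the exponent is exactly $m$ as defined in \eqref{eq-def-m}. This is the standard computation of the order of a foliation along an exceptional divisor; I would cite it or spell it out in one line. Then, taking $\gamma^{*}\omega = (\pi\circ\tilde\gamma)^{*}\omega = \tilde\gamma^{*}(\pi^{*}\omega) = \tilde\gamma^{*}(x^{m}\tilde\omega) = x(t)^{m}\,\tilde\gamma^{*}\tilde\omega$, and taking $\ord_{t=0}$ of both sides, we obtain
\[
\tg_{0}(\F,\Gamma) = m\cdot\ord_{t=0}x(t) + \ord_{t=0}\tilde\gamma^{*}\tilde\omega = m\,\nu_{0}(\Gamma) + \tg_{q}(\tilde{\F},\tilde\Gamma),
\]
which is the claimed identity.

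The main obstacle I anticipate is bookkeeping around the distinction between the chart $(x,u)$ and the chart $(v,y)$, and making sure the computation is coordinate-independent: one must check that the order $\ord_{t=0}\tilde\gamma^{*}\tilde\omega$ really computes $\tg_{q}(\tilde{\F},\tilde\Gamma)$ regardless of which chart contains $q$, and that $\tilde\Gamma$ is non-invariant by $\tilde{\F}$ (which follows from $\Gamma$ non-invariant by $\F$, since $\pi$ is a biholomorphism away from $D$). One must also handle the edge case where $q$ is the corner point $x=u=0$; there one passes to the other chart, but the same divisibility-by-$x^{m}$ computation applies after relabeling, and $\nu_{0}(\Gamma)$ is still the multiplicity of $\Gamma$ at the origin. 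A secondary point is to verify that the generic choice of coordinates making $\Gamma$ transverse to $D$ does not affect $\tg_{0}(\F,\Gamma)$ or $\nu_{0}(\Gamma)$, which is immediate since both are intrinsic. None of these is serious; the heart of the proof is the one-line divisibility fact for $\pi^{*}\omega$, and everything else is substitution into parametrizations and taking orders.
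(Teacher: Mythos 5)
Your proposal is correct and follows essentially the same route as the paper: choose coordinates so that $q$ lies in the chart $(x,u)$ with $\ord_{t=0}x(t)=\nu_{0}(\Gamma)$, compute $\pi^{*}\omega$, factor out $x^{m}$ to get the defining $1$-form of $\tilde{\F}$, and take orders of the pulled-back form along the parametrization. The only cosmetic difference is that the paper normalizes the tangent cone of $\Gamma$ to the $x$-axis so that $q=(0,0)$, while you only require $q$ to lie in that chart; your aside about a ``corner point'' is moot since the first blow-up produces a single irreducible divisor with no corners.
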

\begin{proof}
 Let us write $\gamma(t)=(x(t), y(t))$. By a linear change of coordinates, we can suppose that the $x$-axis is   the tangent cone of $\Gamma$, so that $\nu_{0}(\Gamma) = \text{ord}_{t=0} x(t)$.
 Write, in coordinates, the blow-up map as $\pi(x,u) = (x,ux)$, so that $q = (0,0)$.
 Thus, we can obtain a  parametrization  $\tilde{\gamma}(t) = (x(t),u(t))$ of $\tilde{\Gamma}$ such that
 $\gamma (t)=\pi \circ \tilde{\gamma}  (t)$, which is equivalent to
 $y(t) = u(t) x(t)$.
If $\omega=P(x,y)dy-Q(x,y)dx$, then
$$\pi^*\omega =  (-Q(x,ux)+uP(x,ux))dx+xP(x,ux)du .$$
The foliation $\tilde{\F}$ is then defined by $\tilde{\omega}  = (1/x^{m}) \pi^*\omega$,
where $m$ is given by \eqref{eq-def-m}.
We have
$ \gamma^{*} \omega = (\pi\circ \tilde{\gamma})^{*} \omega =  \tilde{\gamma}^* \pi^*\omega$.
Thus
\begin{eqnarray*}
\tg_{0}(\F,\Gamma) =  \text{ord}_{t=0} \{\gamma^{*} \omega\} =
\text{ord}_{t=0}\{\tilde{\gamma}^* \pi^*\omega\}& = &
\text{ord}_{t=0}\{(x(t))^{m} \tilde{\gamma}^*\tilde{\omega}\} \\
&= & m \, \text{ord}_{t=0}x(t) +  \text{ord}_{t=0}\{\tilde{\gamma}^* \tilde{\omega} \} \\
&=  & m \nu_{0}(\Gamma) +  \tg_{q}(\tilde{\F},\tilde{\Gamma}).
\end{eqnarray*}
\end{proof}

Consider a divisor of separatrices $\B= \sum_{B \in \sep(\F)} a_{B} \cdot B$ and
a blow-up $\pi :(\tilde{\C^{2}}, D  ) \to (\C^{2}, 0)$. We define the
strict transform  of $\B$ by $\pi$ in a standard manner:
\[\tilde{\B} =\pi^{*}\B =\sum_{B \in \sep(\F)} a_{B} \cdot \tilde{B}  ,\]
where $\tilde{B}  = \pi^*B$. We can germify this object at a fixed point $q \in D$,
which adds up to  erasing  all curves $\tilde{B}$ not passing through $q$.
This is a divisor of separatrices for $\F$ at $q$, that we denote by $\tilde{B}_{q}$.

Let $\Gamma$ be a formal branch at $(\C^{2},0)$, non-invariant by $\F$.
Denote by $\mathcal{I}(\Gamma)$ its set of infinitely near points.
 We define the \emph{tangency excess} of $\F$ \emph{along} $\Gamma$ as the integer
\begin{equation}
\label{eq-excees-tang}
\tau_{0}(\F,\Gamma) = \sum_{q \in \mathcal{I}(\Gamma)} \tau_{q}(\tilde{\F}) \nu_{q}(\tilde{\Gamma}),
\end{equation}
where  $\tau_{q}$ and  $\nu_{q}$  are calculated for the strict transforms of $\F$ and $\Gamma$ by the
sequence of blow-ups that produces $q$. Note that the  non-invariance of $\Gamma$   implies that the sum has a finite number of non-zero terms. If $\Gamma$ is a reduced curve, without $\F$-invariant components,  we write its decomposition in irreducible
components $\Gamma = \cup_{i=1}^{k}C_{i}$ and define additively
\[ \tau_{0}(\F,\Gamma) = \sum_{i=1}^{k} \tau_{0}(\F,C_{i}).\]

The following result  is a generalization  of \cite[Cor. 1]{cano2019}:
\begin{lem}
\label{lem-indicdicbald}
Let  $\F$ be  a germ of complex analytic foliation at $(\mathbb{C}^2, 0)$. Let  $\B$ be a balanced divisor of separatrices for $\F$  and $\Gamma$ be a non-invariant formal branch. Then
\[  (\mathcal{B}, \Gamma)_{0} =  \tg_0(\mathcal{F}, \Gamma)+1 - \tau_{0}(\F,\Gamma). \]
\end{lem}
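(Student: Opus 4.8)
The plan is to prove the identity by induction on the length of the minimal reduction of singularities of $\F$, or equivalently on the number of blow-ups needed to desingularize $\F$ together with $\Gamma$. The base case is when $\F$ is already simple at $0\in\C^2$. Then $\tau_0(\F,\Gamma)=0$ (no saddle-nodes), and one needs the direct verification that $(\B,\Gamma)_0 = \tg_0(\F,\Gamma)+1$ for a simple singularity with a balanced divisor of separatrices; for a simple singularity $\B$ is just the sum of the two smooth transverse separatrices (both coefficients $1$), and this is precisely \cite[Cor.~1]{cano2019}, whose statement I would invoke here. I would remark that the dicritical case of the base step (a smooth point, radial blow-up) is also covered, with $\B$ accounting for $2-\val$ separatrices along the dicritical component, and reduces to an elementary local computation.

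For the inductive step, let $\pi\colon(\tilde{\C^2},D)\to(\C^2,0)$ be the first blow-up and set $q=\tilde{\Gamma}\cap D$. The foliation $\tilde{\F}=\pi^*\F$ at $q$ (together with the divisor $D$) has a strictly shorter reduction of singularities, so the inductive hypothesis applies to $\tilde{\F}$, the germified divisor $\tilde{\B}_q=\pi^*\B$ at $q$, and the strict transform $\tilde{\Gamma}$. The three terms in the identity transform as follows. First, $\tg_0(\F,\Gamma)=m\,\nu_0(\Gamma)+\tg_q(\tilde{\F},\tilde{\Gamma})$ by Proposition~\ref{prop-blowup-tau}, where $m=\nu_0(\F)$ in the non-dicritical case and $m=\nu_0(\F)+1$ in the dicritical case. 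Second, from the definition \eqref{eq-excees-tang} of tangency excess along $\Gamma$, peeling off the contribution of the first blow-up gives $\tau_0(\F,\Gamma)=\tau_q(\tilde{\F})\,\nu_0(\Gamma)+\tau_0(\tilde{\F},\tilde{\Gamma})$, but since $\F$ is not assumed simple, $\tau_q(\tilde{\F})$ must be re-expressed; in fact summing the tangency-excess contributions over the whole divisor after the first blow-up and using that only $q$ lies on $\tilde{\Gamma}$, one gets exactly $\tau_0(\F,\Gamma)=\tau_0(\tilde{\F},\tilde{\Gamma}\cup D')$ restricted appropriately — I would phrase this simply as the telescoping of \eqref{eq-excees-tang}. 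Third, and this is the heart of the matter, I need the transformation rule for the polar intersection number: $(\B,\Gamma)_0 = \nu_0(\Gamma)\cdot(\text{something})+(\tilde{\B}_q,\tilde{\Gamma})_q$.

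The key computation is therefore the blow-up behavior of $(\B,\Gamma)_0$. Writing $B$ for a separatrix in the support of $\B$, the intersection multiplicity satisfies $(B,\Gamma)_0=\nu_0(B)\nu_0(\Gamma)+(\tilde{B}_q,\tilde{\Gamma})_q$ when $\tilde{B}$ passes through $q$, and $(B,\Gamma)_0=\nu_0(B)\nu_0(\Gamma)$ when it does not (its strict transform meets $D$ elsewhere). Summing against the coefficients $a_B$ of the balanced divisor and using the defining balance conditions — $a_B=1$ for isolated separatrices, and $\sum_{B\in\sep(D)}a_B=2-\val(D)$ for each dicritical component — the total weight $\sum_B a_B\nu_0(B)$ equals $\nu_0(\B)$, which by Proposition~\ref{prop-mult-balanced} (or rather its immediate form $\nu_0(\B)=\nu_0(\F)+1-\tau_0(\F)$) can be rewritten. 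More precisely, one must split according to whether $\pi$ is dicritical: in the non-dicritical case exactly those $B$ whose strict transform passes through $q$ contribute to the germified divisor at $q$ and the leftover sum of $a_B\nu_0(B)$ over curves missing $q$ combines with $\tg_q$'s coefficient $m=\nu_0(\F)$; in the dicritical case the balance condition along $D$ forces the $D$-supported part of the sum to equal $2-\val(D)$, and the bookkeeping produces the extra $+1$ that upgrades $m$ to $\nu_0(\F)+1$. Assembling the three transformed identities and cancelling $\nu_0(\Gamma)$ times $(\nu_0(\F)+[\pi\text{ dicritical}]-\tau_q(\tilde\F))$ against each other using Proposition~\ref{prop-mult-balanced} applied at $0$, the claimed identity at level $0$ follows from that at level $q$. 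I expect the main obstacle to be exactly this bookkeeping in the dicritical case: tracking which separatrices of the balanced divisor survive germification at $q$, reconciling the global balance condition $\sum a_B=2-\val(D)$ with the local count, and checking that the discrepancy is precisely the $+1$ predicted by \eqref{eq-def-m} — this requires care but no deep new idea, and one should double-check it against the radial foliation and a simple dicritical example.
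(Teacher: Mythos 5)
Your proposal follows essentially the same route as the paper: the identity is propagated along the blow-up sequence over the infinitely near points of $\Gamma$, using Proposition \ref{prop-blowup-tau} for the tangency order, Noether's formula plus Proposition \ref{prop-mult-balanced} for $(\B,\Gamma)_0$, and the telescoping of \eqref{eq-excees-tang} for the tangency excess; the paper packages this as a telescoping sum of the quantity $\kappa_0(\F,\Gamma)=\tg_0(\F,\Gamma)+1-(\B,\Gamma)_0$ terminating at a regular transversal point (where $\kappa=0$ by direct inspection), rather than as an induction with a simple-singularity base case resolved by \cite[Cor.~1]{cano2019}, but the content is the same. The one step you leave as ``bookkeeping'' is precisely the ingredient the paper does not reprove: that the germ at $q$ of the strict transform $\tilde\B_q$, corrected by $\epsilon D_q$ with $\epsilon=1$ in the non-dicritical case and $\epsilon=0$ in the dicritical case, is again a balanced divisor for $\tilde\F$ at $q$ (cited as \cite[Lem.~3.11]{genzmer2018}); without this you cannot apply the inductive hypothesis at $q$. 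Note also that your accounting slightly misplaces the source of the $+1$ in the dicritical case: it comes from the order $m=\nu_0(\F)+1$ of vanishing of $\pi^*\omega$ along $D$ in Proposition \ref{prop-blowup-tau}, and is compensated on the divisor side by $\epsilon=0$ (no term $(D_q,\tilde\Gamma)_q=\nu_0(\Gamma)$ is added), not by the balance condition $\sum a_B=2-\val(D)$ itself.
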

\begin{proof}
Let us define
\[ \kappa_{0}(\F,\Gamma) =  \tg_0(\F, \Gamma)+1 - (\B, \Gamma)_{0}.\]
We will first see how $\kappa_{0}(\F,\Gamma)$ is modified by a blow-up   $\pi:(\tilde{\C^{2}},  D) \to (\C^{2}, 0)$   at the origin.
As before, denote by 	$\tilde{\F} = \pi^{*} \F$, $\tilde{\Gamma}  = \pi^{*} \Gamma$ and
  $\tilde{B}  = \pi^{*} B$ the strict transforms
 of $\F$, $\Gamma$ and $B  \in \sep(\F )$. If $q = \tilde{\Gamma}  \cap  D$,
let $\tilde{\B}_{q}  =  ( \pi ^{*} \B)_{q}$ denote the germ at $q$ of the
strict transform of the balanced divisor $\B$.
% We will denote by $\B_{q}$  a balanced divisor of separatrices for $\tilde{\F}$ at $q$.
The germ    $D_{q}$ of  $D$ at $q$  is a separatrix of $\tilde{\F}$ at
 $q$ if and only if $\pi$ is a non-dicritical blow-up. Only in this case it   should  be included  in
a balanced divisor for $\tilde{\F}$:
\[  \B_{q} =
 \tilde{\B}_{q} + \epsilon D_{q}, \]
 where
\[ \epsilon  = \begin{cases}
 1  \ \ \text{if $\pi$ is non-dicritical} \medskip \\
 0 \ \ \text{if $\pi$ is dicritical}
\end{cases}
\]
is a balanced divisor of separatrices
 for $\tilde{\F}$ at $q$ \cite[Lem. 3.11]{genzmer2018}.

 Noether's formula (see, for instance, \cite[Lem. 3.3.4]{casas2000}),
 applied to  $B  \in \sep(\F)$, gives
\[(  B, \Gamma )_{0}= \nu_0 ( B) \nu_0(\Gamma) + ( \tilde{B},\tilde{\Gamma})_{q}. \]
By linearity, this formula can be extended to the balanced divisor
$\B$:
\[(\B, \Gamma)_{0} = \nu_{0}(\B) \nu_{0}(\Gamma) +  (\tilde{\B}_{q}, \tilde{\Gamma})_{q}.\]
 Proposition \ref{prop-mult-balanced} applied to this expression, gives
\begin{equation}
\label{eq-intersection-balanced}
  (\B, \Gamma)_{0} =  (\nu_{0}(\F)+1 - \tau_{0}(\F)) \nu_{0}(\Gamma) +  (\tilde{\B}_{q}, \tilde{\Gamma}  )_{q}.
\end{equation}
On the other hand,
\[\epsilon \nu_0(\Gamma)+ (\tilde{\B}_{q}, \tilde{\Gamma} )_{q} =
\epsilon (  D_{q}, \tilde{\Gamma} )_{q}+ ( \tilde{\B}_{q}, \tilde{\Gamma})_{q} =
 (\tilde{\B}_{q}  + \epsilon D_{q} , \tilde{\Gamma} )_{q} = (\B_{q}  , \tilde{\Gamma} )_{q} .\]
Inserting this in \eqref{eq-intersection-balanced},  we find
\begin{equation}
\label{eq-intesection-B-gamma}
(\mathcal{B}, \Gamma)_{0}  = ( \nu_{0}(\F) + 1 - \epsilon - \tau_{0}(\F)) \nu_{0}(\Gamma) +  (\B_{q}, \tilde{\Gamma}  )_{q}
= (m -\tau_{0}(\F) )\nu_{0}(\Gamma) +  (\B_{q}, \tilde{\Gamma}  )_{q},
\end{equation}
where $m$ is as \eqref{eq-def-m}.
From Proposition \ref{prop-blowup-tau} and  \eqref{eq-intesection-B-gamma}, we have
\begin{eqnarray}
\label{eq-kappa}
\kappa_{0}(\F,\Gamma) - \kappa_{q}(\tilde{\F},\tilde{\Gamma}) & = &
 (\tg_0(\mathcal{F}, \Gamma)+1 - (\mathcal{B}, \Gamma)_{0}) -
 (\tg_q(\tilde{\F}, \tilde{\Gamma})+1 - (\B_{q}, \tilde{\Gamma})_{q}) \smallskip   \\
  & =  & (\tg_0(\mathcal{F}, \Gamma) -  \tg_q(\tilde{\F}, \tilde{\Gamma}))
  -  ((\mathcal{B}, \Gamma)_{0}) - (\B_{q}, \tilde{\Gamma})_{q}) \smallskip  \nonumber \\
  & = & m \nu_{0}(\Gamma) - (m -\tau_{0}(\F) )\nu_{0}(\Gamma) =    \tau_{0}(\F) \nu_{0}(\Gamma) \nonumber
\end{eqnarray}

Let $q_{0}, q_{1}, \cdots , q_{n}$ be successive points in  $\mathcal{I}(\Gamma)$, where
$q_{0}= 0 \in \C^{2}$.
 Let us denote by $\tilde{\F}_{j}$ and by $\tilde{\Gamma}_{j}$ the strict transforms of $\F$ and
$\Gamma$ at $q_{j}$,   with the convention that  $\F = \tilde{\F}_{0}$ and  $\Gamma = \tilde{\Gamma}_{0}$.
We can obtain $n$ such that, at  $q_{n}$,  both
$\tilde{\Gamma}_{n}$ and $\tilde{\F}_{n}$ are regular and transversal, implying that
$\kappa_{q_{n}}(\tilde{\F}_{n},\tilde{\Gamma}_{n})= 0$.
%=  \tg_q(\tilde{\F}, \tilde{\Gamma})+1 - (\B_{q}, \tilde{\Gamma})_{q} = 0$.
Indeed,   we can fix local coordinates $(x,y)$ at $q_{n}$ such that $\tilde{\F}_{n}$
is defined by $\omega=dx$ and   $\tilde{\Gamma}_{n}$ is the $x$-axis.
Thus, $B_{q_{n}}  = \left\{ x=0 \right\}$ is the only separatrix of $\tilde{\F}_{n}$, so
  that the balanced divisor is  $\B_{q_{n}}  = B_{q_{n}}$. It is then  straightforward that
$\tg_q(\tilde{\F}, \tilde{\Gamma})= 0$ and that
 $(\B_{q}, \tilde{\Gamma})_{q} = (B, \Gamma)_{q} = 1$.

Now, we have
 \begin{eqnarray*}
 \tg_0(\F, \Gamma)+1 - (\B, \Gamma)_{0}& = & \kappa_{0}(\F,\Gamma) \\
 & = &  \kappa_{q_{0}}(\tilde{\F}_{0},\tilde{\Gamma}_{0}) - \kappa_{q_{n}}(\tilde{\F}_{n},\tilde{\Gamma}_{n}) \\
 & = & \sum_{j=0}^{n-1} \left( \kappa_{q_{j}}(\tilde{\F}_{j},\tilde{\Gamma}_{j}) - \kappa_{q_{j+1}}(\tilde{\F}_{j+1},\tilde{\Gamma}_{j+1}) \right) \\
 & = & \sum_{j=0}^{n-1}  \tau_{j}(\tilde{\F}_{j}) \nu_{j}(\tilde{\Gamma}_{j}) = \tau_{0}(\F,\Gamma)
\ \ \text{(by \eqref{eq-kappa} and \eqref{eq-excees-tang})   },
\end{eqnarray*}
proving the Lemma.
\end{proof}

Now we can prove the main result of this section:
%\cite[Prop. 2]{cano2019}:

\begin{prop}
\label{prop-polar-mu-nu}
Let $\F$ be a germ of singular complex analytic foliation at $(\mathbb{C}^2, 0)$ and $\B$ be a balanced divisor
of separatrices for $\F$. Then
	\begin{equation*}
	p_0(\F,  \B) =\mu_0(\F)+\nu_0(\F)  - \tau_{0}(\F,\Gamma) ,
	\end{equation*}
where $\Gamma$ is a generic polar curve.
\end{prop}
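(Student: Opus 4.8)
The plan is to reduce the statement to Lemma~\ref{lem-indicdicbald} applied to the generic polar curve $\Gamma = \Po_{(a:b)}$. First I would recall that the polar intersection was defined precisely by $p_{0}(\F, B) = \ord_{t=0}\left((aP-bQ)\circ \gamma_{B}(t)\right) = (\Po_{(a:b)}, B)_{0}$, extended additively to the balanced divisor $\B$, so that $p_{0}(\F,\B) = (\Po_{(a:b)}, \B)_{0} = (\B, \Gamma)_{0}$. Since $\Gamma$ is a generic polar curve, it is reduced and has no $\F$-invariant components, so Lemma~\ref{lem-indicdicbald} applies (summing over its irreducible branches) and gives
\[ p_{0}(\F,\B) = (\B,\Gamma)_{0} = \tg_{0}(\F,\Gamma) + 1 - \tau_{0}(\F,\Gamma). \]
Thus the proposition is equivalent to the identity $\tg_{0}(\F,\Gamma) + 1 = \mu_{0}(\F) + \nu_{0}(\F)$ for a generic polar curve $\Gamma$.

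The core of the argument is therefore to establish that last identity. The quantity $\tg_{0}(\F,\Gamma) = \ord_{t=0}\gamma^{*}\omega$ is, by definition of $\Gamma$ as the zero locus of $aP - bQ$, closely tied to the classical relation between the Milnor number of a foliation and the intersection multiplicity of its polar curve with a generic line (or with the curve $aP-bQ=0$ itself). Concretely, I would use the standard fact that for generic $(a:b)$ one has $\mu_{0}(\F) = (P,Q)_{0} = \dim_{\C}\C\{x,y\}/(P,Q)$ and relate this to $(aP-bQ, bP+aQ)_{0}$ or, more directly, to the tangency order along $\Gamma$: writing $\omega = P\,dy - Q\,dx$ and parametrizing $\Gamma$ by $\gamma(t)$, the pullback $\gamma^{*}\omega$ vanishes to an order that measures how the direction field of $\F$ rotates along the polar curve. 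The cleanest route is to invoke the formula $\tg_{0}(\F,\Gamma) = (P,Q)_{0} + \nu_{0}(\Gamma) - 1$ valid when $\Gamma$ is a generic polar curve — this is a known computation in the polar theory of foliations (see \cite{corral2003, cano2019}) — together with $\nu_{0}(\Gamma) = \nu_{0}(\Po_{(a:b)}) = \nu_{0}(\F)$ and $(P,Q)_{0} = \mu_{0}(\F)$. Substituting gives $\tg_{0}(\F,\Gamma) + 1 = \mu_{0}(\F) + \nu_{0}(\F)$, which combined with the displayed equation above yields the proposition.

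The main obstacle I anticipate is justifying the identity $\tg_{0}(\F,\Gamma) = \mu_{0}(\F) + \nu_{0}(\F) - 1$ cleanly in the present generality — in particular handling the dicritical case and making sure the genericity of $(a:b)$ is used correctly (reducedness, equisingularity, absence of invariant components, and the equality $\nu_{0}(\Po_{(a:b)}) = \nu_{0}(\F)$). One safe way to avoid relying on an external black box is an inductive argument mirroring the proof of Lemma~\ref{lem-indicdicbald}: show that both sides of $\tg_{0}(\F,\Gamma) + 1 - \mu_{0}(\F) - \nu_{0}(\F)$ transform compatibly under a blow-up, using Proposition~\ref{prop-blowup-tau} for the $\tg$ term, the blow-up formula for the Milnor number $\mu_{0}(\F) = \nu_{0}(\F)(\nu_{0}(\F) \mp 1) + \sum_{q} \mu_{q}(\tilde{\F}) + (\text{correction})$ in the (non-)dicritical cases, and the additivity of $\tg$ and $\mu$ over the branches of $\Gamma$, terminating at a stage where $\tilde{\F}$ and $\tilde{\Gamma}$ are smooth and transversal so that all three quantities are trivially matched. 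Either route reduces the proposition to bookkeeping; the delicate point is the exact constants in the Milnor-number blow-up formula, which I would pin down by a direct local computation in the normal-crossing coordinates $(x,u)$ used in Proposition~\ref{prop-blowup-tau}.
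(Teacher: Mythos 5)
Your overall strategy is the paper's: reduce to Lemma~\ref{lem-indicdicbald} applied to a generic polar curve and then identify $\tg_0(\F,\Gamma)$ with $\mu_0(\F)+\nu_0(\F)$ up to a constant. But there is a genuine error in how you sum over branches, and it propagates into the key identity you then set out to prove. Lemma~\ref{lem-indicdicbald} is stated for a single non-invariant \emph{branch}; applying it to each irreducible component $C_1,\dots,C_k$ of $\Gamma$ and adding gives
\[ (\B,\Gamma)_0=\sum_{i=1}^{k}\bigl(\tg_0(\F,C_i)+1-\tau_0(\F,C_i)\bigr)=\tg_0(\F,\Gamma)+k-\tau_0(\F,\Gamma), \]
with a ``$+k$'', not a ``$+1$''. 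Consequently the identity you reduce the proposition to, $\tg_0(\F,\Gamma)+1=\mu_0(\F)+\nu_0(\F)$, and the ``known formula'' $\tg_0(\F,\Gamma)=(P,Q)_0+\nu_0(\Gamma)-1$ that you invoke to prove it, are both false whenever the generic polar curve is reducible. A concrete counterexample: for $\omega=x^2\,dy-y^2\,dx$ the generic polar curve $ax^2-by^2=0$ splits into two smooth branches, $\mu_0=4$, $\nu_0=2$, and a direct computation gives $\tg_0(\F,C_i)=2$ for each branch, so $\tg_0(\F,\Gamma)=4\neq\mu_0(\F)+\nu_0(\F)-1=5$. Your two errors are both off by $k-1$ and cancel in the final formula, so you land on the correct statement, but neither route you propose for the (false) intermediate identity --- citing it as known, or proving it by an induction on blow-ups --- can succeed, since that identity simply does not hold.

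The repair is exactly the paper's argument: keep everything branchwise. For each $C_i$, parametrized by $\gamma_i(t)=(x_i(t),y_i(t))$, the polar equation $P=bQ$ along $C_i$ gives $\gamma_i^*\omega=Q(\gamma_i(t))\bigl(by_i'(t)-x_i'(t)\bigr)\,dt$, whence $\tg_0(\F,C_i)+1=(Q,C_i)_0+\nu_0(C_i)$, the ``$+1$'' being absorbed branch by branch when passing from $by_i'-x_i'$ to $by_i-x_i$. Summing over $i$ and using $(Q,\Gamma)_0=(Q,P-bQ)_0=(Q,P)_0=\mu_0(\F)$ together with $\nu_0(\Gamma)=\nu_0(\F)$ yields $\sum_{i=1}^{k}\bigl(\tg_0(\F,C_i)+1\bigr)=\mu_0(\F)+\nu_0(\F)$, which combined with the corrected display above gives the proposition.
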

\begin{proof}
Let  	$\Gamma =\Po_{(a:b)}$ be a generic polar curve for $\F$.
% is in the the generic set associated to the polar intersection number $p_{0}(\F,\B)$.
 We can suppose that $a=1$.
 Write $\Gamma=\cup_{i=1}^{k}C_{i}$ the decomposition of $\Gamma$ in irreducible components.
As a consequence of Lemma \ref{lem-indicdicbald}, we have
\begin{eqnarray}
\label{eq-inequality-p0}
p_0(\F,  \B )  \ =  \ (\Gamma, \B)_{0} \ = \ \sum_{i=1}^{k}  (C_{i}, \B)_{0}&  = & \sum_{i=1}^{k} (\tg_0(\F, C_{i})+1 - \tau_0(\F, C_{i}) )\\
% & = & \sum_{i=1}^{k} (\tg_0(\F, C_{i})+1 - \tau_0(\F, C_{i}) ) \\
  & = & \sum_{i=1}^{k} \left(\tg_0(\F, C_{i})+1 \right) - \tau_0(\F, \Gamma).   \nonumber
\end{eqnarray}
%where we used that $\tau_0(\F, \Gamma)=  \sum_{i=1}^{k}  \tau_0(\F, C_{i})$.
%where equality happens  if and only of if $\mathcal{F}$ is of second type.

From this point on, the  proof is the same  as that of	\cite[Prop. 2]{cano2019}.
We write it here for the sake of completeness.
%Let $\omega=P(x,y) dy - Q(x,y) dx$ be a $1-$form defining $\mathcal{F}$.
For each $C_{i}$, we write a    parametrization
 $ \gamma_{i}(t)=(x_i(t),y_i(t))$. Thus, from the
 equation $P-b Q=0$  of   $\Gamma$, we find that
 $P(\gamma_i(t))=b Q(\gamma_i(t))$ for all $t$.
 % and for every $i=1,\ldots,k$.
We have
\begin{eqnarray}
\label{eq-tg-mu-nu}
		\tg_0(\mathcal{F}, C_{i}) + 1  & = & \ord_{t=0} \left( \gamma_{i}^*\omega  \right) + 1 \\
 &=&
 \ord_{t=0} \left\{  Q(\gamma_i(t)) \left(  by_{i}'(t) -  x_{i}'(t)
		  \right) \right\} + 1  \nonumber \\
	&=& \ord_{t=0}   Q(\gamma_i(t))  +\ord_{t=0} \left(   by_{i}'(t) -  x_{i}'(t) \right)+ 1 \nonumber  \\
		&=& \ord_{t=0}   Q(\gamma_i(t))    +
\ord_{t=0} \left(   by_{i}(t) -  x_{i}(t)
		    \right) \nonumber
		 \\
		&=& (Q, C_{i})_{0}   +\nu_{0}(C_{i}). \nonumber
\end{eqnarray}
The proof  is then completed by putting \eqref{eq-tg-mu-nu} in \eqref{eq-inequality-p0},
observing that
\[\sum_{i=1}^{k}  (Q, C_{i})_{0} = (Q, \Gamma)_{0} =
(Q, P-bQ)_{0} = (Q,P)_{0}= \dim_{\mathbb{C}} \C\{x,y\}/(P, Q)=\mu_0(\mathcal{F}) \]
and  that $\sum_{i=1}^{k} \nu_{0}(C_{i}) =  \nu_{0}(\Gamma)$.
\end{proof}

\section{The Milnor number and separatrices}
\label{section-theoremB}

In this section,
we resume the study of the pair   $\R$-analytic vector field and its complexification, $X_{\R}$ and $X_{\C}$, along with their associated foliations,
$\F_{\R}$ and $\F_{\C}$. Hence,
the subscripts $\R$ or $\C$ will be reincorporated to our notation.

Let $\Gamma_{\R}$ be a germ of real analytic curve,
non-invariant by $\F_{\R}$.  We can define, in an obvious manner, a real version of
tangency excess of a foliation
along $\Gamma_{\R}$. If $\Gamma_{\R}$ is a branch,  $\tau_{0}(\F,\Gamma_{\R})$ is   the integer
 obtained, in formula \eqref{eq-excees-tang}, by  simply replacing $\tau_{q}(\tilde{\F}_{\C})$  by
 its real equivalent $\tau_{q}(\tilde{\F}_{\R})$. If $\Gamma_{\R}$ is a union of branches,
 $\tau_{0}(\F,\Gamma_{\R})$ is defined additively, as we did before.
Thus, if $\Gamma_{\R}$ is a germ of $\R$-analytic curve, without $\F_{\R}$-invariant branches, and
$\Gamma_{\C}$ is its complexification,  we have, by \eqref{eq-tau-mod2},
\begin{equation}
\label{eq-tau-noninv-mod2}
\tau_{0}(\F_{\R},\Gamma_{\R}) \equiv \tau_{0}(\F_{\C},\Gamma_{\C}) \pmod{2} .
\end{equation}

We chose, for $\F_{\C}$, a generic polar curve
$\Gamma_{\C} = \Po_{(a:b)}$ with $a, b \in \R$. Its equation
$aP_{\C} - bQ_{\C}=0$ has real coefficients and its decomposition
in irreducible components is such that the components which are
not $J$-symmetric appear in pairs. Let $\Gamma_{\R}$   be the real curve corresponding to
 the union of
$J$-symmetric components of $\Gamma_{\C}$. We will call it a \emph{generic real polar curve}
for $\F_{\R}$. We set
 \[ p_{0}(\F_{\R},\B_{\R}) = (\Gamma_{\R}, \B_{\R})_{0} ,\]
% \[ p_{\R,0}(\F_{\R}) = (\Gamma_{\C}^{r}, \B^{r})_{0} ,\]
where $\B_{\R}$ is  balanced divisor of separatrices for $\F_{\R}$ and the intersection number
 should be understood as that of complexified  curves.
Again, it is a consequence of the $J$-symmetry of $\F_{\C}$ that
\begin{equation}
\label{eq-polar-mult-mod2}
 p_{0}(\F_{\R},\B_{\R}) \equiv p_{0}(\F_{\C},\B_{\C}) \pmod{2}.
 \end{equation}

Considering these definitions and equations
\eqref{eq-tau-noninv-mod2} and \eqref{eq-polar-mult-mod2}, we can
state  the following real version for
Proposition \ref{prop-polar-mu-nu}:
\begin{prop}
\label{prop-polar-mu-nu-mod2}
Let $\F_{\R}$ be a germ of real analytic foliation at $(\R^2, 0)$. If    $\B_{\R}$ is a balanced divisor
of separatrices, then
\begin{equation*}
p_{0}(\F_{\R},\B_{\R}) = \mu_0(\F) + \nu_0(\F)  - \tau_{0}(\F,\Gamma_{\R}) \pmod{2},
\end{equation*}
where $\Gamma_{\R}$ is a generic real polar curve.
\end{prop}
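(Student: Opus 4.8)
The plan is to read this off as the modulo-$2$ shadow of the complex identity of Proposition \ref{prop-polar-mu-nu}, applied to the complexification $\F_\C$, and then to transport each term down to the real trace using the congruences \eqref{eq-tau-noninv-mod2} and \eqref{eq-polar-mult-mod2} already at hand.

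First I would fix a generic polar curve $\Gamma_\C = \Po_{(a:b)}$ for $\F_\C$ with $(a:b)$ having real coordinates, so that the generic real polar curve $\Gamma_\R$ is exactly the union of the $J$-symmetric components of $\Gamma_\C$. Proposition \ref{prop-polar-mu-nu}, applied to $\F_\C$ and to the balanced divisor $\B_\C$, gives
\[
p_0(\F_\C, \B_\C) = \mu_0(\F_\C) + \nu_0(\F_\C) - \tau_0(\F_\C, \Gamma_\C).
\]
I would then reduce this identity modulo $2$. On the left, \eqref{eq-polar-mult-mod2} gives $p_0(\F_\C, \B_\C) \equiv p_0(\F_\R, \B_\R) \pmod 2$; and by the very definition of $\mu_0$ and $\nu_0$ for real foliations, $\mu_0(\F_\C) = \mu_0(\F_\R)$ and $\nu_0(\F_\C) = \nu_0(\F_\R)$. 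So these three terms already match the right-hand side of the asserted formula.

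The one term needing care is the last one, where one must replace the full complex polar curve $\Gamma_\C$ by the complexification $(\Gamma_\R)_\C$ of the generic real polar curve. As already observed, the components of $\Gamma_\C$ that are not $J$-symmetric occur in conjugate pairs $C, C^\vee$, and the $\tilde{J}$-symmetry of $\tilde{\F}_{\C}$, which carries the reduction of singularities of $C$ onto that of $C^\vee$, gives $\tau_0(\F_\C, C) = \tau_0(\F_\C, C^\vee)$ for each such pair. Since $\tau_0(\F_\C, \cdot)$ is additive over components, the total contribution of those pairs is even, so $\tau_0(\F_\C, \Gamma_\C) \equiv \tau_0(\F_\C, (\Gamma_\R)_\C) \pmod 2$; and because $\Gamma_\R$ has no $\F_\R$-invariant branch, \eqref{eq-tau-noninv-mod2} then gives $\tau_0(\F_\C, (\Gamma_\R)_\C) \equiv \tau_0(\F_\R, \Gamma_\R) \pmod 2$. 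Substituting these three congruences into the displayed identity yields the proposition. I do not expect a genuine obstacle here: the statement is essentially Proposition \ref{prop-polar-mu-nu} read modulo $2$, the only point requiring attention being the bookkeeping of the conjugate pairs of purely complex polar components described above.
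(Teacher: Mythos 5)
Your proposal is correct and follows exactly the route the paper intends: the paper offers no separate proof, presenting the proposition as the mod-$2$ reduction of Proposition \ref{prop-polar-mu-nu} via the congruences \eqref{eq-tau-noninv-mod2} and \eqref{eq-polar-mult-mod2}. Your only addition is to spell out the bookkeeping step that the conjugate pairs of non-$J$-symmetric components of $\Gamma_{\C}$ contribute evenly to $\tau_{0}(\F_{\C},\Gamma_{\C})$, which the paper leaves implicit.
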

We can assure the existence of a separatrix for $\F_{\R}$   by showing that
$p_{0}(\F_{\R},\B_{\R})$ is non-zero. Indeed, this implies that $\B_{\R}$
has non-empty support. In this way, we can
use Proposition \ref{prop-polar-mu-nu-mod2} to prove   the following result:

\begin{teorB}
Let $X_{\R}$ be a germ of real analytic vector field at $(\R^{2},0)$ with associated foliation
$\F_{\R}$. Suppose that $\F_{\R}$ is a real topological generalized curve foliation.
If $\mu_{0}(\F_{\R})$ is even,
then $\F_{\R}$ has a formal separatrix.
\end{teorB}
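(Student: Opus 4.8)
The plan is to mimic the structure of the proof of Theorem A, but using the Milnor number invariant via the polar machinery developed in Section \ref{section-polar} instead of the algebraic multiplicity. The key observation is that we want to show the balanced divisor $\B_{\R}$ has non-empty support, and by the remark preceding the theorem it suffices to show that $p_{0}(\F_{\R},\B_{\R})$ is odd (in particular non-zero). We invoke Proposition \ref{prop-polar-mu-nu-mod2}, which tells us
\[ p_{0}(\F_{\R},\B_{\R}) \equiv \mu_0(\F_{\R}) + \nu_0(\F_{\R}) - \tau_{0}(\F_{\R},\Gamma_{\R}) \pmod{2}, \]
where $\Gamma_{\R}$ is a generic real polar curve. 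So the whole proof reduces to a parity computation of the right-hand side.

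The first step is to handle $\tau_{0}(\F_{\R},\Gamma_{\R})$. Since $\F_{\R}$ is a topological $\R$GC foliation, every tangent saddle-node appearing in any reduction of singularities — including those of the strict transforms of $\F_{\R}$ along the infinitely near points of $\Gamma_{\R}$ — is a topological saddle or node, hence has odd weak index, so each local tangency excess $\tau_{q}(\tilde{\F}_{\R})$ is even. Consequently every summand $\tau_{q}(\tilde{\F}_{\R})\nu_{q}(\tilde{\Gamma}_{\R})$ in the defining formula \eqref{eq-excees-tang} (real version) is even, and therefore $\tau_{0}(\F_{\R},\Gamma_{\R})$ is even. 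This mirrors exactly the Remark used in Theorem A, only now localized along the polar curve rather than over the whole divisor.

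The second step is to deal with $\mu_0(\F_{\R}) + \nu_0(\F_{\R})$. We are given $\mu_0(\F_{\R})$ even, so it remains to control $\nu_0(\F_{\R})$. Here I would split into two cases. If $\nu_0(\F_{\R})$ is even, then by Theorem A we are already done (since $\F_{\R}$ is a topological $\R$GC foliation). So we may assume $\nu_0(\F_{\R})$ is odd. Then $\mu_0(\F_{\R}) + \nu_0(\F_{\R}) - \tau_{0}(\F_{\R},\Gamma_{\R})$ is even $+$ odd $-$ even $=$ odd, so $p_{0}(\F_{\R},\B_{\R})$ is odd, hence non-zero, hence $\B_{\R}$ has non-empty support and $\F_{\R}$ admits a (possibly formal) separatrix.

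The main obstacle — and the reason this theorem needs the substantial machinery of Section \ref{section-polar} rather than being a one-line corollary — lies in ensuring that the real polar invariant $p_{0}(\F_{\R},\B_{\R})$ is genuinely well-defined and that its parity is correctly linked to $\mu_0$, $\nu_0$ and the tangency excess through Proposition \ref{prop-polar-mu-nu-mod2}. That link in turn rests on the $J$-symmetry arguments (equations \eqref{eq-tau-noninv-mod2} and \eqref{eq-polar-mult-mod2}) reducing everything mod $2$ to the complex statement of Proposition \ref{prop-polar-mu-nu}, whose proof uses Lemma \ref{lem-indicdicbald} and Proposition \ref{prop-mult-balanced}. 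Assuming those are in hand, the proof of Theorem B itself is short: the only real content is the case division on the parity of $\nu_0(\F_{\R})$ together with the evenness of $\tau_{0}(\F_{\R},\Gamma_{\R})$ for topological $\R$GC foliations.
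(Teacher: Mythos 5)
Your proposal is correct and follows essentially the same route as the paper: reduce to the case $\nu_{0}(\F_{\R})$ odd via Theorem A, note that the topological $\R$GC hypothesis forces $\tau_{0}(\F_{\R},\Gamma_{\R})$ to be even, and conclude from Proposition \ref{prop-polar-mu-nu-mod2} that $p_{0}(\F_{\R},\B_{\R})$ is odd, hence the balanced divisor has non-empty support. No discrepancies with the paper's argument.
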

\begin{proof}
If $\nu_{0}(\F_{\R})$ is even, there is a separatrix by Theorem A.
Thus, we can assume that $\nu_{0}(\F_{\R})$ is odd.
The foliation $\F_{\R}$ is a topological $\R$GC, so
 there are no topological saddle-nodes in its reduction of singularties. Thus,
 all tangency excess indices $\tau_{q}(\tilde{\F}_{\R})$ for the infinitely near
points of the real polar curve $\Gamma_{\R}$ are even, giving that     $\tau_{0}(\F,\Gamma_{\R})$ is also even.
Now, we find by Proposition \ref{prop-polar-mu-nu-mod2} that
$p_{0}(\F_{\R},\B_{\R}) =1 \pmod{2}$, which implies the existence of a separatrix.
\end{proof}
With arguments similar to those of Corollaries \ref{cor-nu-RGC} and \ref{cor-nu-centerfocus}, we obtain
the following results of \cite{Risler2001}:

\begin{cor} If  $\F_{\R}$ is an $\R$-generalized curve foliation and $\mu_{0}(\F_{\R})$ is even,
then  it admits an
analytic separatrix.
\end{cor}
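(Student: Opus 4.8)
The plan is to deduce this corollary from Theorem B in exactly the same way that Corollary \ref{cor-nu-RGC} is deduced from Theorem A. First I would observe that an $\R$-generalized curve foliation is, by definition, a foliation whose reduction of singularities contains no saddle-nodes at all; in particular it contains no topological saddle-nodes, so it is a topological $\R$-generalized curve foliation and Theorem B applies. Hence, since $\mu_{0}(\F_{\R})$ is assumed even, $\F_{\R}$ admits a (possibly formal) separatrix.

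The second step is to upgrade ``formal'' to ``analytic''. Here I would invoke the structural remark recorded in Section \ref{section-definitions}: the purely formal separatrices of a foliation are precisely those coming from weak (central) separatrices of saddle-node singularities of the reduced model that are not contained in the exceptional divisor, while all the remaining separatrices --- those associated to non-degenerate trace singularities and those touching dicritical components, the latter being desingularized by the fixed $\sigma_{\R}$ --- are convergent. Since an $\R$GC foliation has no saddle-nodes in its reduction of singularities, it has no purely formal separatrices: every element of $\sep(\F_{\R})$ is analytic. Consequently the separatrix produced by Theorem B is in fact convergent, which proves the corollary.

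Since this is a direct consequence of Theorem B together with a classification fact already stated earlier, there is no genuine obstacle; the only point deserving a moment's care is the claim that $\R$GC foliations carry no purely formal separatrices, and this is immediate from the description of the separatrices of simple (here: non-degenerate) singularities and from the fact that $\sigma_{\R}$ desingularizes $\sep(\F_{\R})$.
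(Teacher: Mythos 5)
Your argument is correct and is exactly the one the paper intends: reduce to Theorem B via the observation that an $\R$GC foliation is in particular a topological $\R$GC foliation, and then upgrade the formal separatrix to an analytic one because a foliation with no saddle-nodes in its reduction of singularities has no purely formal separatrices. This matches the paper's own (one-line) justification, which mirrors the deduction of Corollary \ref{cor-nu-RGC} from Theorem A.
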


\begin{cor}
If  $\F_{\R}$ is a center-focus foliation, then $\mu_{0}(\F_{\R})$ is odd.
\end{cor}

The following example  shows that, with the hypothesis stated, we cannot
  ask for   analytic separatrices in  Theorems A and B:
\begin{example}
\label{ex-mol-sanz} {\rm
	Let $a(x) \in   \mathbb{R}\{x\}$ be a convergent series in one variable such that   $a(0)=a'(0)=0$,  and consider the $\R$-analytic vector field
\[X_{\R} = (y^{2}+x^{4}) \frac{\partial}{\partial x} +\left(-xy +x^{3}a(x) + \frac{a(x)}{x}y^{2}\right) \frac{\partial}{\partial y}.\]
It is proved in \cite[Prop. 10]{molsanz2019} that $X_{\R}$ has a unique separatrix which is, for a generic choice of $a(x)$,   purely formal.
By a first blow-up $\pi_{1}:(x,y_{1})   \mapsto   (x,y = xy_{1})$,
the strict transform foliation has a unique singularity over $D_{1} = \pi_{1}^{-1}(0)$, placed at  $(x,y_{1})=(0,0)$, which is a saddle-node
with strong separatrix contained in the   $D_{1}$.
Another blow-up $\pi_{1}:(x,y_{2})   \mapsto   (x,y_{1} = xy_{2})$ produces
a strict transform foliation induced at the origin by a vector field of the form
\[  x^{3}  \frac{\partial}{\partial x}
+ (- y_{2}(1 + b(x,y_{2})) + a(x))  \frac{\partial}{\partial y_{2}},\]
where  $b(x,y_{2})$ contains terms of order at least two.
The weak index of this saddle-node is $\iota^{w} = 3$ and it is not a topological saddle-node (actually,
 it is a topological saddle, that can be converted into a topological node by changing $y$ to $-y$ in the
 coefficients of $X_{\R}$).
The vector field $X_{\R}$ is thus a topological real generalized curve whose only separatrix (for a
 generic choice of $a(x)$) is purely formal.
It is easy to see that $\nu_{0}(X_{\R}) = 2$ and  $\mu_{0}(X_{\R})=6$, showing that the statements of Theorems
A and B cannot be improved by asking a convergent separatrix.
}\end{example}

Our main results could also have been stated in terms of the more general family of \emph{topological second type foliations}: real foliations without tangent topological saddle nodes.
We  finish by  mentioning this fact, placing it as a consequence of Theorems A and B:

\begin{cor} Let $\F_{\R}$ be a germ of  topological  $\R$-second type foliation at $(\R^{2},0)$.
If either $\nu_{0}(\F_{\R})$ or $\mu_{0}(\F_{\R})$ is even,
then $\F_{\R}$ has a formal separatrix.
\end{cor}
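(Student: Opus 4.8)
The plan is to notice that the proofs of Theorems A and B use the topological $\R$GC hypothesis in exactly one place — to guarantee that the tangency excess indices appearing in Propositions~\ref{prop-parity-separtrix} and~\ref{prop-polar-mu-nu-mod2} are even — and that this parity statement survives under the weaker hypothesis that $\F_{\R}$ is a topological $\R$-second type foliation. So I would first re-establish the relevant parity, and then re-run the two arguments verbatim.

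For the key step, recall that both $\tau_{0}(\F_{\R})$ of Definition~\ref{def-tang-exc} and the tangency excess $\tau_{0}(\F_{\R},\Gamma_{\R})$ along a non-invariant real curve defined by~\eqref{eq-excees-tang} are, by construction, sums of terms of the form $\rho\cdot(\iota^{w}-1)$ taken over the \emph{tangent} saddle-nodes of the pertinent strict transforms, i.e. those saddle-nodes whose weak separatrix lies in the exceptional divisor. The hypothesis that $\F_{\R}$ is a topological $\R$-second type foliation says precisely that none of these tangent saddle-nodes is a topological saddle-node; hence each of them is, topologically, a saddle or a node, so its weak index $\iota^{w}$ is odd and $\iota^{w}-1$ is even. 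Since a saddle-node that is tangent with respect to the reduction of a strict transform $\tilde{\F}_{\R,q}$ is a fortiori tangent with respect to the reduction of $\F_{\R}$ — the divisor entering the local reduction is a union of components of the total divisor of $\sigma_{\R}$, which only grows under further blow-ups — the same conclusion applies to every local index $\tau_{q}(\tilde{\F}_{\R})$ that enters~\eqref{eq-excees-tang}. Therefore $\tau_{0}(\F_{\R})$ is even, and $\tau_{0}(\F_{\R},\Gamma_{\R})$ is even for every germ of real analytic curve $\Gamma_{\R}$ with no $\F_{\R}$-invariant branch.

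With this in hand I would argue as in Theorems A and B. If $\nu_{0}(\F_{\R})$ is even, then it has the same parity as $\tau_{0}(\F_{\R})$, and Proposition~\ref{prop-parity-separtrix} yields a formal separatrix. If instead $\mu_{0}(\F_{\R})$ is even, I may assume $\nu_{0}(\F_{\R})$ is odd, since otherwise the previous case applies; choosing a generic real polar curve $\Gamma_{\R}$ and a balanced divisor $\B_{\R}$, the evenness of $\tau_{0}(\F_{\R},\Gamma_{\R})$ together with Proposition~\ref{prop-polar-mu-nu-mod2} gives $p_{0}(\F_{\R},\B_{\R})\equiv\mu_{0}(\F_{\R})+\nu_{0}(\F_{\R})-\tau_{0}(\F_{\R},\Gamma_{\R})\equiv 1\pmod 2$, so $p_{0}(\F_{\R},\B_{\R})\neq 0$, the divisor $\B_{\R}$ has non-empty support, and $\F_{\R}$ has a separatrix.

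The only point requiring care is the stability remark used above — that the class of tangent saddle-nodes is not enlarged by passing to strict transforms at infinitely near points of $\Gamma_{\R}$ — but, as indicated, this follows at once from the structure of the divisor of a reduction of singularities. I expect no other genuine obstacle, since the remainder is a word-for-word repetition of the proofs of Theorems A and B once the evenness of the relevant tangency excess is available.
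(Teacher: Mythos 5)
Your argument is correct, but it is genuinely different from the one in the paper. The paper proves the corollary by contradiction in three lines: if $\F_{\R}$ had no separatrix, the reduced model $\tilde{\F}_{\R}$ would have no trace singularities (and no dicritical components), so every singularity would sit at a corner of $\D_{\R}$; a saddle-node at a corner is automatically tangent, so the second-type hypothesis (no \emph{tangent} topological saddle-nodes) would force the absence of \emph{any} topological saddle-node, i.e.\ $\F_{\R}$ would be a topological $\R$GC foliation, and Theorems A and B would then produce a separatrix --- a contradiction. You instead reopen the proofs of Theorems A and B and observe that the topological $\R$GC hypothesis is only ever used through the evenness of $\tau_{0}(\F_{\R})$ and of the local indices $\tau_{q}(\tilde{\F}_{\R})$ entering $\tau_{0}(\F_{\R},\Gamma_{\R})$, and that these sums are by definition supported on tangent saddle-nodes, so the weaker second-type hypothesis already makes every term $\rho\cdot(\iota^{w}-1)$ even. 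Both routes work. The paper's argument is shorter, uses Theorems A and B as black boxes, and sidesteps entirely the one technical point your proof needs: that a saddle-node which is tangent for the local reduction at an infinitely near point $q$ of $\Gamma_{\R}$ is tangent for the global reduction of $\F_{\R}$. You flag this correctly, and it does hold (the local minimal reduction at a non-simple point $q$ is dominated by the global one, and its divisor components are components of the global divisor), and indeed the paper's own proof of Theorem B already relies implicitly on the same compatibility between local and global reductions; still, it is an extra verification your route cannot avoid. What your approach buys in exchange is a sharper statement: it shows that Theorems A and B, as proved, are really theorems about topological second-type foliations, rather than deducing the extension a posteriori by a dichotomy on the existence of separatrices.
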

\begin{proof}
It there were no separatrices, the reduced foliation $\tilde{\F}_{\R}$ would have no trace singularities and, so, $\F_{\R}$ would be a topological  $\R$-generalized curve foliation.
This would contradict Theorems A and B.
\end{proof}

\bibliographystyle{plain}
\bibliography{referencias}

\end{document}